\newtheorem{thm}{Theorem}[section]
\newtheorem{prop}[thm]{Proposition}
\newtheorem{lem}[thm]{Lemma}
\theoremstyle{plain}
\theoremstyle{remark}
\newtheorem*{req}{Remark}
\newcommand{\e}{\ensuremath{\varepsilon}}
\def\R{\mathbb{R}}
\def\Z{\mathbb{Z}}
\def\N{\mathbb{N}}
\def\C{\hat{\mathcal{C}}}
\def\Cs{\mathcal{C}}
\title{On the Time Constant in a Dependent First Passage Percolation Model}
\author{Julie Scholler}
\address{Institut \'Elie Cartan Nancy (math{\'e}matiques)\\
Universit{\'e} Henri Poincar{\'e} Nancy 1\\
Campus Scientifique, BP 239 \\
54506 Vandoeuvre-l{\`e}s-Nancy  Cedex France\\}
\email{Julie.Scholler@iecn.u-nancy.fr}
\subjclass[2010]{60K35, 82B43.}
\keywords{first passage percolation, percolation, time constant, random coloring}
\begin{document}
\renewcommand{\proofname}{Proof}

\begin{abstract} 

We pursue the study of a random coloring first passage percolation model introduced by Fontes and Newman.
We prove that the asymptotic shape of this first passage percolation model continuously depends on the law of the coloring.
The proof uses several couplings, particularly with  greedy lattice animals.
\end{abstract} 
\maketitle
\bigskip

\section{Introduction}

First passage percolation model was introduced by Hammersley and Welsh~\cite{HW} as a stochastic model for a porous media. To each edge $e$ of the simple cubic lattice, we assign a random non negative number $t(e)$, which is interpreted as the time to cross the edge in either direction. Classically the random variables $t(e)$ are independent identically distributed (i.i.d.). In this article, we study a specific dependent model introduced by Fontes and Newman~\cite{fetn}. To construct it, we begin with an independent identically distributed random coloring of $\Z^d$. Then the random variables $t(e)$ are defined as follows: if the endpoints of the edge $e$ are in different colors, then the random variable $t(e)$ is equal to 1, otherwise it is equal to $0$. $\mathbb{Z}^d$ is split into color clusters representing countries and thus one spends one unit of time to go through a border. As in the standard model, there exists a seminorm $\mu_p$ ($p$ refers to the coloring law) on $\mathbb{R}^d$ which governs the propagation speed.

We show that the seminorm on the unit ball is continuous with respect to the coloring law and that the asymptotic shape is also continuous with respect to the coloring law for the Hausdorff distance.

Up to now, the seminorm continuity has been studied only in the case of standard first passage percolation by Cox~\cite{cox} and later by Cox and Kesten~\cite{coxkesten}. We will not use the same method as in the previous articles. We will proceed by taking advantage of the possibility to construct an interesting coupling of two random colorings and of the relation of the model with greedy lattice animals.

\section{Model and results}

\label{model}

\subsection{Random colorings and the associated passage times} 
Let $\Z^d$ be the integer lattice of dimension $d \geq 2$. Let $(X_u)_{u \in \Z ^d}$ be a family of positive integer valued independent identically distributed random variables. If we consider positive integers to be colors, then the joint law of the family $(X_u)_{u \in \Z ^d}$ determines a random coloring of $\Z^d$. We endow $\Z^d$ with the set of edges $\mathbb{E}^d$ defined by 
$$\mathbb{E}^d~=~\left\lbrace  \{u,v\};  u \in \Z^d, \ v \in \Z^d, \ \|u-v\|_1 := \sum_{i=1}^d |u_i-v_i|=1 \right\rbrace.$$ 
Now random colorings give rise to a first passage percolation model by defining for each edge $\{u,v\}$ in $\mathbb{E}^d$, $\ t \big( \{ u,v\} \big):=\textbf{1}_{\{X_u \neq X_v \}}$, where $\textbf{1}_{A}$ is the indicator function of the set $A$.

If the coloring is regarded as representing a geographical map, then each individual color cluster represents a single country and one can travel instantaneously inside a country but one spends one unit of time to cross a border.

Let us note that, though the $X_u$, $u \in \Z^d$, are independent, the random variables $t(e)$, $e \in \mathbb{E}^d $, are not. This is the difference with the standard first passage percolation model. We refer to Kesten's Saint Flour Notes~\cite{kesten} for a introduction to the subject and to Kesten's survey~\cite{surveyK} or Howard's survey~\cite{howard} for more recent results.

We define $$\mathcal{P} := \left\lbrace (p_1,p_2, \ldots ) \in [0,1]^{\N^*}; \sum_{i=1}^{+ \infty} p_i =1 \right\rbrace.$$ Let us remark that the $L_1$-norm, the supremum norm and the pointwise convergence lead to the same topology on $\mathcal{P}$ (this can be proved with Scheffe's Lemma~\cite{billingsley}). Thus we will use indistinctly the $L_1$ norm ($\| \cdot \|_1$) or the supremum norm ($| \cdot |$) on $\mathcal{P}$. Let $p=(p_1,p_2, \ldots)$ be in $\mathcal{P}$, then the law of  $X_u$ is $\sum_{i=1}^{+ \infty} p_i \delta_{i}$.

\subsection{Notation and general definitions for first passage percolation}
A \textit{path} $\gamma$ of \textit{length} $k$ is a sequence of edges and vertices, $\gamma=(u_1,e_1, \ldots , e_k, u_{k+1})$, such that for each integer $j$ between $1$ and $k$, $u_j$ is a vertex of $\Z^d$ and  $e_j$ is the edge between $u_j$ and $u_{j+1}$, \textit{i.e.}~$e_j=\{u_j,u_{j+1}\}$. So the length of a path $\gamma$, denoted by $l(\gamma)$, is the number of its edges. The number of vertices of a path $\gamma$ is denoted by $|\gamma|$.
Sometimes we will only consider \textit{self-avoiding} paths for which the vertices are all distinct. We define the \textit{passage time} of the path $\gamma$ to be the sum of the passage times through the edges of $\gamma$: $$T(\gamma) :=\sum_{i=1}^k t(e_i).$$ If $u$ and $v$ are vertices of $\Z^d$, then $\Gamma(u,v)$ denotes the set of the paths from $u$ to $v$ and $T(u,v)$ the passage time from $u$ to $v$, \textit{i.e.} $T(u,v):= \inf \left\lbrace T(\gamma) \ ; \ \gamma \in \Gamma(u,v) \right\rbrace.$
If a path $\gamma$ from $u$ to $v$ satisfies $T(\gamma)=T(u,v)$, then $\gamma$ is called a \textit{route}. Let us note that, for first passage percolation on a random coloring, $T(u,v)$ is a minimum, that is, for any given pair of vertices, there is always a route.

We can extend the definition of passage time between two sites of $\Z^ d$ to points of $\mathbb{R}^d$ as follows: to each point $x$ of $\mathbb{R}^ d$, we assign the nearest site $x^{\star}$ of $\mathbb{Z}^d$ with deterministic rules in case of equality. We define $T(x,y)$ to be $T(x^{\star},y^{\star})$.
One of the main topics of interest in first passage percolation is the set $B(t)$ of points that can be reached from the origin by time $t$, \textit{i.e.} $$B(t)= \overline{\left\lbrace x \in \mathbb{R}^d; T(0,x) \leq t\right\rbrace} .$$

\subsection{The seminorm $\mu$}
First of all, let us note that the joint law of the passage times induced by an i.i.d. random coloring is translation invariant and ergodic. Moreover, the random variables $t(e)$ are bounded by $1$. In the following, $T_p$ denotes the passage time corresponding to the coloring law $p$. With this notation, we have the following result:
\begin{prop}
For each $p$ in $\mathcal{P}$ and for each $x$ in $\mathbb{R}^d$, there exists a constant $\mu_p(x)$ such that
$$\lim_{n \rightarrow + \infty} \frac{T_p(0,nx)}{n} = \mu_p(x) \text{ a.s.}$$

Furthermore, $\mu_p$ is a seminorm on $\mathbb{R}^d$.
\end{prop}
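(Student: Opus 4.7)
The plan is to apply Kingman's subadditive ergodic theorem to the family $T_p(mx, nx)$ for fixed $x \in \Z^d$, then extend to rational directions by positive homogeneity, and finally to all of $\R^d$ using the fact that passage times are Lipschitz in their endpoints with respect to the $\ell^1$-norm.

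First I would fix $x \in \Z^d \setminus \{0\}$ and set $X_{m,n} := T_p(mx, nx)$ for $0 \le m < n$. The triangle inequality $T_p(0, nx) \le T_p(0, mx) + T_p(mx, nx)$ (concatenation of routes) gives subadditivity. Since the i.i.d.\ coloring $(X_u)_{u \in \Z^d}$ is invariant under the translation $\tau_x$, so is the induced field of passage times, which yields stationarity of the sequence $(X_{n, n+1})_n$ and of the array $(X_{m,n})$ in the appropriate sense. Ergodicity comes for free: any nontrivial translation of a product measure on $\Z^d$ is mixing. Integrability is immediate from the bound $0 \le T_p(0, x) \le \|x\|_1$, obtained by taking a path along the coordinate axes and using $t(e) \le 1$. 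Kingman's theorem then delivers a constant $\mu_p(x)$ with $T_p(0, nx)/n \to \mu_p(x)$ almost surely and in $L^1$.

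Next I would extend $\mu_p$ to $\mathbb{Q}^d$ by setting $\mu_p(x/q) := \mu_p(x)/q$ for $q \in \N^*$, which is consistent because the limit along the subsequence $n = qm$ forces this identity. To pass to $\R^d$, I would use the Lipschitz estimate $|T_p(u, v) - T_p(u', v')| \le \|u-u'\|_1 + \|v-v'\|_1$, valid for $u, v, u', v' \in \Z^d$ (prepend and append coordinate paths whose edge costs are all at most $1$). This makes $\mu_p$ uniformly Lipschitz on $\mathbb{Q}^d$, hence extendable by continuity to $\R^d$. A standard three-epsilon argument, together with the fact that for $x \in \R^d$ the integer approximation $(nx)^\star$ differs from $nx$ by an $O(1)$ error, then upgrades the almost sure convergence to every real direction.

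Finally, the seminorm properties follow directly from those of $T_p$: positive homogeneity is built into the construction; the triangle inequality $\mu_p(x+y) \le \mu_p(x) + \mu_p(y)$ comes from $T_p(0, n(x+y)) \le T_p(0, nx) + T_p(nx, n(x+y))$ together with the stationarity of the second term; and symmetry $\mu_p(-x) = \mu_p(x)$ follows from the invariance of the coloring law under lattice reflections. I expect the most delicate step to be the transition from rational to real directions, where one must combine the Lipschitz bound with control of the discretization error; everything else is a direct application of the subadditive machinery, made possible by the distinctive feature $t(e) \le 1$ of the Fontes--Newman model, which makes the $L^1$-integrability and Lipschitz continuity essentially automatic.
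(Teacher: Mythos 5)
Your proposal is correct and follows essentially the same route as the paper: apply Kingman's subadditive ergodic theorem to $T_p(mx,nx)$ for $x\in\Z^d$ (noting stationarity and ergodicity from the i.i.d.\ coloring, and integrability from $t(e)\le 1$), extend to $\mathbb{Q}^d$ by homogeneity, and then to $\R^d$ via the $1$-Lipschitz estimate, with the seminorm properties coming from subadditivity of $T_p$ and lattice symmetries. The paper's own argument is exactly this, merely stated more tersely.
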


The standard basis of $\R^d$ is denoted by $( \e_1, \ldots , \e_d )$.
Let us remark that, when the joint law of the passage times is invariant under permutations of the coordinates, the constants $\mu(\e_i)$ are the same. This constant $\mu(\e_1)$ is known as the \textit{time constant}.

For $x$ in $\Z^d$, the convergence follows from Kingman's subadditive ergodic theorem~\cite{MR0254907} (we can also use Liggett's improved version~\cite{MR806224}). In addition, the limit is the lower bound and is also the lower bound of the expectation. The definition of the function $\mu_p$ can be extended to $ \mathbb{Q}^d$ by defining $\mu_p \left( \frac{m}{k} \right)= \frac{1}{k} \mu_p(m)$ for $m$ in $\Z^d$ and $k$ a non-zero integer. Since the function $\mu_p$ is 1-Lipschitz continuous on $ \mathbb{Q}^d$, it can be extended to $\mathbb{R}^d$. We can actually check that $\frac{T_p(0,nx)}{n}$ converges to $\mu_p(x)$ when $n$ goes to infinity. 

Our main result concerns the continuity with respect to the coloring law of this seminorm. To clearly state it, we denote by $\mathcal{B}(0,1)$ the closed unit ball for the euclidean norm and by $\| \cdot \|_{\infty}$ the infinity norm on $\mathcal{B}(0,1)$, \textit{i.e.} $\|f\|_{\infty} = \sup_{x \in \mathcal{B}(0,1)} | f(x) |$. Then our main result is:

\begin{thm}\label{uni}
 The function $\mu$ :
\begin{tabular}{ccc}
 $\mathcal{P}$ & $\rightarrow$ & $ \mathcal{C} \left( \mathcal{B}(0,1) , \R \right)$ \\
$p$ & $\mapsto$ & $\left( x \mapsto \mu_p(x)\right)$
\end{tabular} is continuous with respect to the infinity norm.
\end{thm}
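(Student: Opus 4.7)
The plan is to reduce uniform continuity to pointwise continuity via an equicontinuity argument, and then to establish pointwise continuity through a vertex-wise coupling of the two colorings combined with a greedy lattice animal estimate.

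First, since each edge weight lies in $\{0,1\}$, one has the deterministic bound $T_p(0, nx) \le \|nx\|_1$, hence $\mu_p(x) \le \|x\|_1$ uniformly in $p$. Combined with subadditivity this gives $|\mu_p(x) - \mu_p(y)| \le \|x - y\|_1$ for all $p, x, y$, so the family $\{\mu_p\}_{p \in \mathcal{P}}$ is uniformly $1$-Lipschitz on $\mathcal{B}(0,1)$. A standard Arzel\`a--Ascoli-type argument then reduces Theorem~\ref{uni} to the pointwise statement $\mu_{p^{(n)}}(x) \to \mu_p(x)$ for each fixed $x \in \Z^d$.

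For the pointwise step, fix $x \in \Z^d$ and two laws $p, p' \in \mathcal{P}$, and use the maximal vertex coupling to realize $(X_u, X'_u)_{u \in \Z^d}$ as i.i.d.\ pairs, so that the discrepancies $B_u := \mathbf{1}_{X_u \ne X'_u}$ form an i.i.d.\ Bernoulli$(\delta)$ field with $\delta = \tfrac{1}{2}\|p - p'\|_1$. Writing $t, t'$ for the associated edge times, an edge $e = \{u,v\}$ can differ between the two environments only when $B_u + B_v \ge 1$, so $|t(e) - t'(e)| \le B_u + B_v$ and hence $|T(\gamma) - T'(\gamma)| \le 2\sum_{u \in \gamma} B_u$ along any self-avoiding path $\gamma$. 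Inserting a route $\gamma_n^{\star}$ from $0$ to $nx$ under the coloring $(X_u)$ yields
\[
T_{p'}(0,nx) - T_p(0,nx) \le 2 \sum_{u \in \gamma_n^{\star}} B_u \le 2\, M_{|\gamma_n^{\star}|},
\]
where $M_k$ denotes the greedy lattice animal maximum of $B$-mass over connected subgraphs of size $k$ containing the origin. If $\gamma_n^{\star}$ can be chosen with $|\gamma_n^{\star}| \le C_x n$ for an $x$-dependent but $n$-independent constant, then the greedy lattice animal theorem (Cox--Gandolfi--Griffin--Kesten) gives $M_{C_x n}/n \to C_x N(\delta)$ a.s., with $N(\delta) \to 0$ as $\delta \to 0$ by an elementary first-moment bound over animals. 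Passing to the limit in $n$ and swapping $p$ and $p'$ would then yield $|\mu_p(x) - \mu_{p'}(x)| \le 2 C_x N(\delta) \to 0$ as $p' \to p$.

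The main obstacle is the linear-length route estimate. A priori, routes can be arbitrarily long: as soon as a color exceeds the site-percolation threshold, the corresponding infinite monochromatic cluster permits zero-cost excursions of arbitrary extent. The natural remedy is not to analyze an arbitrary route but to construct one: begin with any route and, within each maximal monochromatic stretch, replace the detour by an internal graph-shortest path inside the cluster, which preserves the passage time while reducing the length. Showing that the resulting length is $O(n)$ will likely require chemical-distance estimates of Antal--Pisztora type in the supercritical phase. Once this linear-length ingredient is in place, the coupling and greedy animal pieces fit together cleanly and produce the quantitative modulus of continuity $|\mu_p(x) - \mu_{p'}(x)| \le 2 C_x N\!\left(\tfrac{1}{2}\|p - p'\|_1\right)$.
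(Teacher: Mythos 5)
Your plan has the right ingredients --- a vertex-wise coupling and a greedy lattice animal bound, both of which the paper also uses --- and the reduction from uniform to pointwise continuity via the uniform $1$-Lipschitz property is sound. But the argument is not complete, and the step you identify as the ``main obstacle'' is precisely where it stalls, in a way that your proposed remedy does not repair.

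The linear-length route estimate $|\gamma_n^{\star}| \le C_x n$ with a deterministic, $n$-independent constant is not merely a technicality: it is the hard part, and it is false in general. When $|p| \ge p_c(\Z^d,\text{site})$ a single color is supercritical (or critical), $\mu_p \equiv 0$, and optimal routes can be genuinely superlinear. Your chemical-distance surgery inside supercritical clusters does not save the argument: the entry and exit points on the infinite cluster can be separated by order $n$, the surgery bounds the modified length by a sum of chemical distances over the visited clusters, and the number of large clusters a route visits is itself a greedy-lattice-animal quantity --- so the estimate is circular, and in any case the Antal--Pisztora constants blow up as one approaches criticality. In the subcritical regime $|p| < p_c$ a version of the linear-length control is true, but proving it is exactly the content of Lemma~\ref{petitc}, whose proof occupies Section~\ref{kesten} and relies on the decomposition of $1+T(r)$ as a sum of inverse cluster-weights, stochastic domination by subcritical Bernoulli clusters, a truncation, and Martin's large-deviation bound for greedy lattice animals. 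Your proposal does not supply a substitute for that lemma, so the quantitative bound $|\mu_p(x) - \mu_{p'}(x)| \le 2C_x N(\delta)$ is not established. You would also need the length control under \emph{both} colorings to make the inequality two-sided, which your write-up leaves implicit.

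The paper's route around this is worth noting because it is structurally different: rather than bounding the length of optimal routes, it introduces the $k$-short-path seminorms $\mu^k_p$, for which the paths have length $\le k\|u-v\|$ by fiat. Continuity of each $\mu^k_p$ in $p$ then follows from a straightforward coupling and union bound over at most $(2d)^{kn}$ bounded-length paths (Lemma~\ref{contmuk}), with no percolation input. The hard percolation estimate (Lemma~\ref{petitc}) is invoked only to show that $\mu^k_p \to \mu_p$ uniformly in $p$ on each set $E_{\theta,S}$, which then gives $\mu = \inf_k \mu^k$ as a locally uniform limit of continuous functions. Finally, the regime $|p| \ge p_c$ is handled by a separate upper-semicontinuity argument: $\mu_p = \inf_k \mu^k_p$ is an infimum of continuous functions, hence u.s.c., and $\mu_p \equiv 0$ there forces $\lim_{q\to p}\mu_q = 0$. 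Your proposal contains no analogue of this last step, and your modulus-of-continuity bound has no meaning when $C_x = \infty$. To turn your sketch into a proof you would have to (i) prove a quantitative subcritical route-length estimate of the strength of Lemma~\ref{petitc}, (ii) make the coupling bound two-sided, and (iii) add the semicontinuity argument at and beyond criticality --- at which point you would essentially have reconstructed the paper's argument.
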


\begin{req}\label{princ}
It follows that, for each $x$ in $\R^d$, the function
\begin{tabular}{ccc}
 $\mathcal{P}$ & $\rightarrow$ & $\R ^+$ \\
$p$ & $\mapsto$ & $\mu_p(x)$
\end{tabular}
is continuous.
\end{req}

\subsection{The asymptotic shape}

Results on the asymptotic shape in standard first passage percolation have been extended to the case of stationary ergodic passage times with a moment condition by Boivin~\cite{boivin}. However, in first passage percolation on an i.i.d. random coloring, the passage times are bounded. It is a simpler case which was remarked by Derriennic and indicated at the end of Kesten's Saint-Flour notes~\cite{kesten}.
Thus, in the case of first passage percolation on a random coloring, we have the following result:

\begin{prop}
Let $B_{\mu_p} = \{x \in \R^d ; \mu_p(x) \leq 1 \}$.
\begin{enumerate}
 \item If $\mu_p(\e_1)>0$, then the set $B_{\mu_p}$ is a deterministic convex compact set with nonempty interior and for any $\e>0$, we have $$(1- \e)B_{\mu_p} \subset \frac{B(t)}{t} \subset (1+ \e) B_{\mu_p} \text{ for $t$ large enough a.s.}$$
	\item If $\mu_p(\e_1)=0$, then $\mu_p\equiv 0$ (so $B_{\mu_p}=\R^d$) and for any compact set $K$, we have $K \subset \frac{B(t)}{t}$ for $t$ large enough a.s.
\end{enumerate}
\end{prop}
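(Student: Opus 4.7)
The plan is to prove both parts through one central uniform-convergence lemma, supplemented in case (1) by the observation that $\mu_p$ is actually a norm, not merely a seminorm. Because $(X_u)_{u \in \Z^d}$ is i.i.d., its joint law is invariant under every reflection and permutation of the coordinates of $\Z^d$, and these symmetries descend to $\mu_p$; in particular $\mu_p(\e_i) = \mu_p(\e_1)$ for all $i$. Assuming $\mu_p(\e_1) > 0$, $\mu_p$ must be a norm: if $\mu_p(x) = 0$ for some nonzero $x$, we may assume (after permuting coordinates) $x_1 \neq 0$, and letting $\sigma$ denote the sign-flip of the first coordinate we get
\[
2|x_1|\,\mu_p(\e_1) \;=\; \mu_p(x - \sigma x) \;\leq\; \mu_p(x) + \mu_p(\sigma x) \;=\; 0,
\]
a contradiction. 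Hence $B_{\mu_p}$ is the closed unit ball of a norm on $\R^d$, automatically convex, compact, and with nonempty interior.

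The central lemma then reads: for every $R > 0$,
\[
\sup_{x \in \mathcal{B}(0,R)} \left| \frac{T_p(0,nx)}{n} - \mu_p(x) \right| \;\xrightarrow[n \to \infty]{\text{a.s.}}\; 0.
\]
Its three ingredients are the pointwise convergence of the previous proposition applied on a countable dense subset of $\mathcal{B}(0,R)$, the $1$-Lipschitz continuity of $\mu_p$ already recorded in the excerpt, and the $1$-Lipschitz continuity of $T_p(0,\cdot)$. The last is specific to the bounded case: since $t(e) \leq 1$, a coordinate-by-coordinate lattice path shows $T_p(x,y) \leq \|x-y\|_1$, whence $|T_p(0,x) - T_p(0,y)| \leq \|x-y\|_1$. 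A standard $3\e$-argument using a finite cover of $\mathcal{B}(0,R)$ and pointwise convergence at the centers closes the lemma.

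From this lemma both inclusions in case (1) drop out. For the inner inclusion, any $x$ with $\mu_p(x) \leq 1-\e$ lies in the fixed compact $B_{\mu_p}$, and uniform convergence gives $T_p(0,tx)/t \leq 1-\e/2$ for $t$ large, hence $tx \in B(t)$. For the outer inclusion I first establish an a priori confinement $B(t) \subset \mathcal{B}(0,Rt)$: since $\mu_p$ is a norm, $c := \inf_{\|y\|=1}\mu_p(y) > 0$, and the uniform lemma applied on the unit sphere with $n = \|y\|$ gives $T_p(0,y) \geq (c/2)\|y\|$ for every $y \in \Z^d$ with $\|y\|$ large, so $B(t) \subset \mathcal{B}(0, 2t/c)$ for $t$ large; uniform convergence on this fixed ball then forces $\mu_p(y/t) \leq 1+\e$ for every $y \in B(t)$. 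In case (2), the same symmetry argument together with subadditivity gives $\mu_p(x) \leq \sum_i |x_i|\,\mu_p(\e_i) = 0$, so $\mu_p \equiv 0$; uniform convergence with limit $0$ applied to any compact $K$ then yields $K \subset B(t)/t$ eventually.

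The main obstacle I anticipate is the a priori confinement step in case (1): because $t(e)$ can vanish, $B(t)$ is not manifestly bounded, and long excursions through large $0$-cost clusters must be excluded before the uniform-convergence lemma can be invoked on a fixed ball. The norm property of $\mu_p$, available precisely when $\mu_p(\e_1) > 0$, is what closes this step and underlies the dichotomy with case (2), where $B(t)/t$ can and does spread to all of $\R^d$.
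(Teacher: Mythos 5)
Your proof is correct and follows exactly the route the paper gestures at through the Boivin reference and the Derriennic remark in Kesten's Saint-Flour notes: boundedness of $t(e)$ makes $T(0,\cdot)$ Lipschitz, which upgrades the Kingman pointwise limits to almost-sure uniform convergence on compacts, from which both inclusions of the shape theorem, the a~priori confinement of $B(t)$, and the degenerate case $\mu_p\equiv 0$ all follow. The paper itself gives no proof for this proposition, so your self-contained argument simply supplies the standard bounded-ergodic-case reasoning that the citations stand in for, including the symmetry argument showing $\mu_p$ is a genuine norm when $\mu_p(\e_1)>0$, which the paper only asserts in a remark.
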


The set $B_{\mu_p}$ is called the asymptotic shape. At this stage, it seems natural to wonder if the asymptotic shape converges with respect to the law  $p$ for the Hausdorff distance when  $\mu_p(\e_1)>0$. In fact, this result follows from Theorem~\ref{uni}. Let us begin by recalling the definition of the Hausdorff distance. Let $X$ and $Y$ be two non-empty compact sets of a metric space $(E,d)$. We define their Hausdorff distance $d_H(X, Y)$ by
$$d_H \left(X,Y\right):= \max \left\lbrace  \sup_{y\in Y} \inf_{x \in X} d(x,y), \sup_{x\in X} \inf_{y \in Y} d(x,y) \right\rbrace.$$

\begin{thm}\label{hauss}
 For each $p$ in $\mathcal{P}$ such that the asymptotic shape is compact, we have
$$\lim_{q \rightarrow p} d_H\left(B_{\mu_p},B_{\mu_q}\right) =0.$$
\end{thm}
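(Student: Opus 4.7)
The plan is to combine Theorem~\ref{uni} with the homogeneity of the seminorms so as to convert uniform closeness of $\mu_p$ and $\mu_q$ on the Euclidean unit ball into Hausdorff closeness of their unit balls $B_{\mu_p}$ and $B_{\mu_q}$.

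First I would verify that the compactness of $B_{\mu_p}$ propagates to all $q$ near $p$. Compactness with non-empty interior forces $\mu_p$ to be a genuine norm on $\R^d$, so continuity of $\mu_p$ on the compact Euclidean unit sphere yields a constant $c>0$ with $\mu_p(x)\geq c\|x\|$ for every $x$. By Theorem~\ref{uni}, for $q$ close enough to $p$ one has $\|\mu_p-\mu_q\|_{\infty}<c/2$ on $\mathcal{B}(0,1)$; combined with the homogeneity bound $|\mu_p(x)-\mu_q(x)|\leq\|x\|\,\|\mu_p-\mu_q\|_{\infty}$, valid on all of $\R^d$, this gives $\mu_q(x)\geq(c/2)\|x\|$ and therefore $B_{\mu_q}\subset\mathcal{B}(0,2/c)$ uniformly in a neighborhood of $p$.

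Next I would apply the classical radial-rescaling argument. Given $x\in B_{\mu_p}$, the point $x/\max(1,\mu_q(x))$ lies in $B_{\mu_q}$ at Euclidean distance at most $\|x\|(\mu_q(x)-\mu_p(x))\leq\|x\|^2\,\|\mu_p-\mu_q\|_{\infty}$, using $\mu_p(x)\leq 1$; this is $O(\|\mu_p-\mu_q\|_{\infty})$ thanks to the radius bound $\|x\|\leq 1/c$. The symmetric inclusion is obtained identically, this time invoking the uniform bound $\|y\|\leq 2/c$ for $y\in B_{\mu_q}$ supplied by the previous paragraph. Letting $q\to p$ and invoking Theorem~\ref{uni} makes $\|\mu_p-\mu_q\|_{\infty}$ vanish, so both suprema defining $d_H(B_{\mu_p},B_{\mu_q})$ tend to zero.

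The only genuinely delicate point is this uniform radius bound on $B_{\mu_q}$ for $q$ near $p$; without it one could not control the Euclidean distance from a point of $B_{\mu_q}$ to its radial projection onto $\partial B_{\mu_p}$. Once that bound is in hand, the rest of the argument is routine rescaling with error linear in $\|\mu_p-\mu_q\|_{\infty}$.
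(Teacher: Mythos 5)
Your proof is correct and follows essentially the same strategy as the paper: establish the lower bound $\mu_p(x)\geq c\|x\|$ from compactness of $B_{\mu_p}$, transfer it to nearby $q$ via Theorem~\ref{uni} to get a uniform radius bound on $B_{\mu_q}$, and then convert uniform closeness of the seminorms to Hausdorff closeness of the unit balls by radial rescaling. The paper packages the two inclusions slightly more compactly by bounding $d_H(B_{\mu_p},B_{\mu_q})$ directly by $\sup_{\|x\|=1} \|x/\mu_p(x)-x/\mu_q(x)\|$ and then estimating that quantity by $\|\mu_p-\mu_q\|_{\infty}/(\alpha_p\alpha_q)$, but this is the same radial argument in a single formula.
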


\smallskip

The article is organized as follows.

\begin{itemize}
\item In Section~\ref{debut}, we recall the condition of strict positivity of $\mu$ and some results on greedy lattice animals. Then we define a notion of short paths, which leads to consider a sequence $(\mu^k)_{k \in \N^*}$ of intermediate functions $\mu^k$ and we show the continuity of the functions $\mu^k$, for all positive integer $k$.
\item Section~\ref{demo} is devoted to the proofs of the theorems. We show that $(\mu^k)_{k \in \N^*}$ converges uniformly to $\mu$ on each set of an open covering of the coloring law and we use the continuity of $\mu$ to get the continuity of the asymptotic shape with respect to the coloring law for the Hausdorff distance.
\item In the last section is postponed the proof of the upper bound of the probability to have paths rather large but of reasonable passage time. It is a useful lemma for the proof of the convergence of $(\mu^k)_{k \in \N^*}$ to $\mu$ and the proof is based on greedy lattice animals.
\end{itemize}

\section{Preliminary results}
\label{debut}

\subsection{Positivity condition}

In their article~\cite{fetn}, Fontes and Newman have given a necessary and sufficient condition for the strict positivity of $\mu(\e_1)$.

\begin{prop}[Fontes and Newman, 93] \label{positivity} In the case of first passage percolation on a random coloring, we have
$$\mu_p(\e_1) >0 \text{ if and only if } |p| < p_c(\Z^d, \text{site}),$$
where $p_c(\Z^d, \text{site})$ is the critical probability for the Bernoulli site percolation model on $\Z^d$ (see Grimmett~\cite{grimmett-book}).
\end{prop}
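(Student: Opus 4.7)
The plan is to prove the two implications separately, exploiting subcritical site-percolation estimates and the greedy lattice animal machinery flagged in the introduction.

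For the easy direction $|p|\ge p_c \Rightarrow \mu_p(\e_1)=0$: if some color $i$ has $p_i > p_c$, then the sites of color $i$ stochastically dominate supercritical Bernoulli$(p_c)$ site percolation and therefore a.s.\ contain a unique infinite cluster $C^\infty$ of positive density, on which the passage time is identically zero. The ergodic theorem supplies points $u_n, v_n \in C^\infty$ with $\|u_n\|_1 = o(n)$ and $\|n\e_1 - v_n\|_1 = o(n)$; concatenating a brute-force path of passage time at most $\|u_n\|_1$ from $0$ to $u_n$, a zero-cost path inside $C^\infty$ from $u_n$ to $v_n$, and a brute-force path from $v_n$ to $n\e_1$ yields $T_p(0, n\e_1) = o(n)$, so $\mu_p(\e_1) = 0$. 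The knife-edge case $\max_i p_i = p_c$ (where no infinite cluster exists in low dimensions) is more delicate and requires a renormalization argument exploiting the fact that at criticality arbitrarily large finite monochromatic clusters exist with non-negligible probability; this is the most subtle point of this implication.

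For the main direction $|p|<p_c \Rightarrow \mu_p(\e_1)>0$, set $P := |p| < p_c$. Each color $i$'s occupied set is Bernoulli$(p_i)$ with $p_i \le P < p_c$, hence subcritical, so the monochromatic cluster $C(u)$ at any site $u$ satisfies a uniform exponential tail bound $\mathbb{P}(\mathrm{diam}_1(C(u)) \ge L) \le e^{-\alpha L}$ for some $\alpha>0$. Given any self-avoiding path $\gamma$ from $0$ to $n\e_1$, decompose it into its $T(\gamma)+1$ maximal monochromatic sub-paths, which live in pairwise distinct monochromatic clusters $C^{(0)}, \dots, C^{(T(\gamma))}$. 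The $\ell_1$-displacement along the $j$-th sub-path is bounded by $\mathrm{diam}_1(C^{(j)})$ and the $T(\gamma)$ edges bridging consecutive pieces contribute at most $T(\gamma)$ to the $\ell_1$-displacement; hence
\[
n \;\le\; T(\gamma) \;+\; \sum_{j=0}^{T(\gamma)} \mathrm{diam}_1\bigl(C^{(j)}\bigr).
\]
To control the right-hand sum one invokes the greedy lattice animal bound (to be recalled in this section): contracting each monochromatic cluster swept by $\gamma$ to a super-vertex and applying the animal estimate to the resulting cluster graph delivers a deterministic $K$ such that, almost surely and eventually, $\sum_j \mathrm{diam}_1(C^{(j)}) \le K(T(\gamma)+1)$ for every admissible decomposition. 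Combining the two inequalities gives $n \le (K+1)(T(\gamma)+1)$ uniformly over $\gamma$, and dividing by $n$ yields $\mu_p(\e_1) \ge 1/(K+1) > 0$.

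The main obstacle is that the weights $\mathrm{diam}_1(C(u))$ are not independent across sites, since two vertices in the same monochromatic cluster share the same value, so the classical greedy lattice animal estimate for i.i.d.\ weights cannot be applied off the shelf. One route is to dominate $\mathrm{diam}_1(C(u))$ by an i.i.d.\ family with a matching exponential tail, using that the weight at $u$ is determined by the coloring in a ball of random but exponentially-tailed radius; another is to argue directly on the contracted cluster graph, where a first-moment union bound over cluster-animals combined with the exponential tail on individual cluster diameters gives the required linear bound. The delicate critical boundary $|p|=p_c$ in the converse direction is the other technical point, but it lies outside the regime of parameters that matters for the continuity theorems of the paper.
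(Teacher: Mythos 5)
The paper does not reprove this proposition---it is imported from Fontes and Newman, with the remark that ``their proof is based on greedy lattice animals''; the Fontes--Newman machinery is then reproduced in Section~\ref{kesten} while proving Lemma~\ref{petitc}. Your high-level strategy for the main direction ($|p|<p_c \Rightarrow \mu_p(\e_1)>0$) is therefore aligned with the intended approach, but the decomposition you chose does not actually couple to the greedy lattice animal estimate, and the reduction as written has a genuine gap. You want to show $\sum_{j=0}^{T(\gamma)}\mathrm{diam}_1\bigl(C^{(j)}\bigr)\leq K\bigl(T(\gamma)+1\bigr)$ uniformly over paths $\gamma$, eventually a.s. But the animal estimate (Proposition~\ref{martthm}) controls averages over \emph{site}-animals of a given size $n$ on $\Z^d$ with i.i.d.\ vertex weights; your sum has only $T(\gamma)+1$ terms, and when $T(\gamma)\ll n$ (e.g.\ a path from $0$ to $n\e_1$ crossing $\sqrt{n}$ clusters each of diameter $\sqrt{n}$, so that $T(\gamma)\approx\sqrt{n}$ while the sum of diameters is $\approx n$) the cluster-averaged quantity blows up. You are then forced to argue that such configurations are rare, which is exactly the content you set out to prove; the ``contracted cluster graph'' is moreover a random graph with dependent weights, not $\Z^d$, so Martin's theorem does not apply to it off the shelf, and ``a first-moment union bound over cluster-animals'' is not a substitute for a proof. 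Fontes--Newman sidestep all of this with a cleverer, site-indexed bound that does couple directly to the animal estimate: $1+T(r)\geq\sum_{v\in r}|\C_v\cap r|^{-1}$ (number of distinct clusters touched), followed by Jensen's inequality to get $\frac{1+T(r)}{|r|}\geq\bigl[\frac{1}{|r|}\sum_{v\in r}|\C_v|\bigr]^{-1}$---a sum over the $|r|$ sites of $r$, which the animal bound controls. The dependence is then removed, as in Section~\ref{kesten}, by introducing i.i.d.\ copies $\tilde{Y}_u$ of the shape of $\Cs_0$, setting $\tilde{\Cs}_u=u+\tilde{Y}_u$ and $\tilde{U}_v=\sup\{|\tilde{\Cs}_u|:v\in\tilde{\Cs}_u\}$, using $\{|\Cs_v|\}_v\prec\{\tilde{U}_v\}_v$ and the key inequality $\frac{1}{|r|}\sum_{v\in r}\tilde{U}_v\leq 2\sup_{\xi'\supset r}\frac{1}{|\xi'|}\sum_{v\in\xi'}|\tilde{\Cs}_v|^2$, reducing to i.i.d.\ weights $|\tilde{\Cs}_v|^2$ on which Martin's theorem applies. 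That trick is the heart of the argument, and your sketch is missing it.

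On the converse direction, your infinite-cluster argument handles $p_i>p_c$ for some $i$, but as you note yourself the boundary case $|p|=p_c$ (where, in $d=2$ at least, there is no infinite cluster) is precisely the delicate point of the ``only if'' implication and is left unaddressed; invoking ``a renormalization argument'' without giving it is a gap, not a proof. Also, small but worth fixing: the $T(\gamma)+1$ maximal monochromatic sub-paths of a self-avoiding path need not lie in pairwise distinct clusters (a path can exit and later re-enter the same cluster through a different vertex), so the ``pairwise distinct'' claim and the identification with a simple cluster-animal are not quite right, though this does not affect the displacement inequality itself.
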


\begin{req} In fact, we can see that
\begin{enumerate}
 \item if $\mu(\e_1)>0$, then for each integer $i$ between $1$ and $d$, $\mu_p(\e_i) >0$ and $\mu_p$ is a norm on $\R^d$,
\item if $\mu(\e_1)=0$, then for each integer $i$ between $1$ and $d$, $\mu_p(\e_i) =0$ and, for all $x$ in $\R^d$, $ \mu_p(x)\leq\|x\|  \mu_p(\e_1)=0$.
\end{enumerate}
Thus for each $x$ in $\mathbb{R}^d \setminus \{\textbf{0}\}$, $\mu_p(x)>0$ if and only if $|p| < p_c(\Z^d, \text{site})$.
\end{req}

\medskip

Their proof is based on greedy lattice animals. This model was introduced by Cox, Gandolphi, Griffin and Kesten~\cite{gla1}.

Let $ \left\{X_v \right\}_{v \in \Z^d}$ be an i.i.d. family of non-negative random variables. For a finite subset $\xi$ of $\Z^d$, the \textit{weight} $S \left( \xi \right)$ of $\xi$ is defined by $S \left( \xi \right) = \sum_{v \in \xi} X_v$. 
A \textit{greedy lattice animal of size $n$} is a connected subset of $\Z^d$ of size $n$ containing the origin whose weight is maximal among all such sets. This maximum weight is denoted by $W(n)~:=~\sup_{|\xi|=n} \sum_{v \in \xi} X_v$. 

To study percolation, we are interested in the limit of $\frac{W(n)}{n}$ which is related to the renormalized supremum of passage time all over the paths containing exactly $n$ sites. Gandolfi and Kesten~\cite{gla2} show that, under the condition that, $\textbf{E}X_0^d (\log^+ X_0)^{d+\epsilon} < \infty$ for some $\e$, there exists a constant $W < \infty$ such that
$$\frac{W(n)}{n} \rightarrow W \text{ almost surely and in } \mathcal{L}_1.$$

In his paper, James Martin~\cite{Martin} shows the same results under slighty weaker condition and, during the proof of Lemma 6.5 (see point (6.14)), he obtains a large deviation result when the law of the $X_v$ is bounded.

\begin{prop}[Martin, 02]\label{martthm}
Let $y$ be a positive real number.

 Then there exist positive real numbers $C_1$, $C_2(y,d)$, $N(y,d)$ such that for each integer $n \geq N$ and for each i.i.d family $\left\{X_v \right\}_{v \in \Z^d}$ of non-negative random variables bounded by $y$, we have
\begin{equation*} \label{mart}
 \textbf{P} \left( \frac{W(n)}{n} \geq W+ 1 \right) \leq C_1 e^{ - C_2 \frac{n}{y^2}}.
\end{equation*}
\end{prop}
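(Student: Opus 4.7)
The plan is to prove a concentration bound for $W(n)$ around its expectation via Talagrand's inequality for configuration functions, and then to replace the expectation by $(W+1)n$ using the $L^1$ convergence of $W(n)/n$ to $W$.

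First, since $0 \le W(n) \le ny$ deterministically, dominated convergence together with the almost sure limit $W(n)/n \to W$ gives $\mathbf{E} W(n)/n \to W$; hence $\mathbf{E} W(n) \le (W + \tfrac12)n$ for all $n \ge N(y,d)$, so that
$$\mathbf{P}\bigl(W(n) \ge (W+1)n\bigr) \le \mathbf{P}\bigl(W(n) - \mathbf{E} W(n) \ge n/2\bigr).$$

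Next I would apply Talagrand's convex-distance inequality in its configuration-function form. The map $W(n)$ is $y$-Lipschitz in the Hamming metric on $[0,y]^{\Z^d}$, since modifying one $X_v$ alters $W(n)$ by at most $y$. Crucially, for each realization $\omega$ a maximizing animal $\xi^{\star}(\omega)$ of size exactly $n$ certifies the value: if $\omega'$ agrees with $\omega$ on $\xi^{\star}(\omega)$, then $W(n)(\omega') \ge W(n)(\omega)$. Talagrand's inequality then delivers
$$\mathbf{P}\bigl(|W(n) - \mathrm{Med}\, W(n)| \ge t\bigr) \le 4 \exp\bigl(- t^2/(16\, n\, y^2)\bigr);$$
integrating yields $|\mathbf{E} W(n) - \mathrm{Med}\, W(n)| \le C\, y \sqrt{n}$, and taking $t = n/2$ (absorbing the mean–median discrepancy for $n$ large compared with $y^2$) gives the advertised bound $C_1 \exp(-C_2\, n/y^2)$. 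The essential combinatorial point is that although $W(n)$ a priori depends on $\Theta(n^d)$ coordinates in a ball of radius $n$, only $n$ of them support the optimum for any given $\omega$; a direct Azuma–Hoeffding or McDiarmid bound would yield $\exp(-c\, n^{2-d}/y^2)$, which is useless for $d \ge 2$, and the configuration-function structure is precisely what restores the correct scaling $n/y^2$ in the exponent.

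The main obstacle is the uniformity of $N$ in the underlying law: the statement asks for a threshold depending only on $y$ and $d$, whereas dominated convergence alone gives a threshold that a priori depends on the full law of $X_v$. One must therefore complement the concentration above with a matching lower bound $\mathbf{E} W(n)/n \ge W - \delta(n,y,d)$ with $\delta$ independent of the law. This can be achieved by a superadditive concatenation argument: gluing $k$ optimal size-$n$ animals along short connecting paths yields a competitor for $W(kn)$, and Fekete's lemma then produces a law-free rate of convergence. Merging this with the concentration estimate pins $N$ to depend only on $y$ and $d$, as required.
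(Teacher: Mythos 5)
The paper itself does not prove this Proposition: it is quoted directly from Martin~\cite{Martin} (equation (6.14) in the proof of his Lemma 6.5), so there is no in-paper argument to compare against. That said, the concentration half of your sketch is the right mechanism and is essentially what Martin uses: $W(n)$ is a configuration function certified by the $n$ vertices of an optimizing animal, each coordinate contributing at most $y$, so Talagrand's convex-distance inequality gives fluctuations of order $y\sqrt{n}$ around the median, and your remark that a bounded-differences (Azuma/McDiarmid) bound would only yield $\exp(-cn^{2-d}/y^2)$ correctly identifies why the certifiable structure is indispensable.

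The gap is in the paragraph where you try to make $N$ depend only on $(y,d)$. The proposed ``superadditive concatenation'' does not deliver what you need. A greedy animal of size $n$ containing the origin can sit anywhere in $[-n,n]^d$, so to join two disjoint copies you must add a connecting path whose length is comparable to $n$, not $O(1)$: for instance, placing a translated copy at $(2n,0,\ldots,0)$ and joining along a straight segment produces a competitor of size about $4n$, giving only $\mathbf{E}W(4n)\ge 2\,\mathbf{E}W(n)$, i.e.\ $\mathbf{E}W(4n)/(4n)\ge \tfrac12\,\mathbf{E}W(n)/n$, which is a loss rather than (approximate) superadditivity. Moreover, even if $\mathbf{E}W(n)$ were exactly superadditive, Fekete's lemma yields no ``rate of convergence''; what it gives is the one-sided inequality $\mathbf{E}W(n)/n\le \sup_m \mathbf{E}W(m)/m=W$ for every $n$, and that one-sided, law-free bound on the mean (or median) is precisely what your argument needs and precisely what the naive gluing fails to establish. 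Obtaining such a bound with constants depending only on $(y,d)$ is the real technical content of Martin's Section 6, and it is done through a renormalization/box decomposition rather than by splicing optimal animals; as written, this step in your proposal is a genuine hole.
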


These results will be used in Section~\ref{kesten} to prove exponential decay of the probability to have paths rather large but of reasonable passage time.

\subsection{Seminorms on $k$-short paths}

To study the seminorm $\mu$, we introduce the notion of $k$-short paths and of seminorms on $k$-short paths. It will be more convenient to work with them and the new seminorms get closer to $\mu$.

\subsubsection{Definitions and relation to $\mu$}

For each positive integer $k$, a \textit{$k$-short path} between two vertices $u$ and $v$ is a path $\gamma$ between $u$ and $v$ such that $l(\gamma) \leq k \|u-v\|$. The set of the $k$-short paths between $u$ and $v$ is denoted by $\Gamma_k(u,v)$ t. These definitions lead to the notion of passage time on $k$-short paths: for each positive integer $k$ and for each $x,y$ in $\R^ d$, we define $$T_p^k(x,y):=\inf\left\lbrace T_p(\gamma); \gamma \in \Gamma_k( x,y) \right\rbrace.$$
\begin{lem}\label{muk}
For each $p$ in $\mathcal{P}$, there exist functions $\mu_p^1, \mu_p^2, \ldots$ from $\mathbb{R}^d$ to $\R_+$ such that, for each positive integer $k$ and for each $x$ in $\mathbb{R}^d$, we have 
\begin{equation} \label{limk1}
\lim_{n \rightarrow + \infty} \frac{T_p^k(0,nx) }{n}= \mu_p^k(x)\text{ a.s.}
\end{equation}
Moreover, for each $x$ in $\R^d$, we have 
\begin{equation} \label{limk2}
\lim_{k \to + \infty} \mu_p^k(x)= \inf_{k \in \N^*} \mu_p^k(x)=\mu_p(x).
\end{equation} 
\end{lem}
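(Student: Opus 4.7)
The plan is to establish (\ref{limk1}) by applying Kingman's subadditive ergodic theorem to the family $(T_p^k(mx, nx))_{0 \leq m \leq n}$ for integer $x$, and then deduce (\ref{limk2}) by combining a trivial monotonicity observation with the exponential bound on long paths of reasonable passage time postponed to Section~\ref{kesten}. For the first step, fix $k \in \mathbb{N}^*$ and $x \in \mathbb{Z}^d$. Concatenating a $k$-short path from $mx$ to $lx$ with a $k$-short path from $lx$ to $nx$ yields a path of length at most $k(l-m)\|x\|_1 + k(n-l)\|x\|_1 = k(n-m)\|x\|_1$, so the family is subadditive. Stationarity and ergodicity under the shift by $x$ follow from the i.i.d.\ coloring, and integrability from the trivial bound $T_p^k(0, x) \leq k\|x\|_1$. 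Kingman's theorem then produces $\mu_p^k$ on $\mathbb{Z}^d$, which one extends homogeneously to $\mathbb{Q}^d$ and by the Lipschitz estimate $|\mu_p^k(x) - \mu_p^k(y)| \leq k \|x - y\|_1$ to $\mathbb{R}^d$; the almost sure convergence for arbitrary $x$ is recovered by approximation, exactly as for $\mu_p$.

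For (\ref{limk2}), the inclusions $\Gamma_k(u,v) \subset \Gamma_{k+1}(u,v) \subset \Gamma(u,v)$ immediately give $\mu_p^k(x) \geq \mu_p^{k+1}(x) \geq \mu_p(x)$, so $\lim_k \mu_p^k(x) = \inf_k \mu_p^k(x) \geq \mu_p(x)$. For the reverse inequality, fix $\epsilon > 0$. I expect the lemma of Section~\ref{kesten} to guarantee a constant $K = K(p, \epsilon)$ such that, almost surely for all sufficiently large $n$, every path from $0$ of length $\geq Kn$ has passage time at least $(\mu_p(x) + \epsilon) n$. Since $T_p(0, nx) < (\mu_p(x) + \epsilon/2) n$ a.s.\ for large $n$, a route from $0$ to $nx$ must then have length at most $Kn$, hence lies in $\Gamma_k(0, nx)$ as soon as $k \geq K/\|x\|_1$. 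This forces $T_p^k(0, nx) = T_p(0, nx)$ eventually a.s., so $\mu_p^k(x) = \mu_p(x)$ for such $k$, and (\ref{limk2}) follows.

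The main obstacle is the lemma itself. A path of length $L$ with passage time at most $Cn$ has at least $L - Cn$ monochromatic edges, so its vertex set is an unusually ``monochromatic'' connected subset of $\mathbb{Z}^d$; the probability of such configurations can be controlled by the greedy lattice animal estimate (Proposition~\ref{martthm}) applied to suitable indicator weights. Summing the usual $(2d)^L$ bound on the number of paths of length $L$ against the resulting exponential factor, over $L \geq Kn$, should yield a probability summable in $n$, at which point Borel--Cantelli produces the almost sure statement used above.
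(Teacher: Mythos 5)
Your proof of~\eqref{limk1} matches the paper's: Kingman on $\Z^d$, extension by homogeneity and the Lipschitz bound. That part is fine.

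The argument for~\eqref{limk2} is where you diverge from the paper, and there is a genuine gap. You obtain the easy inequality $\inf_k \mu_p^k(x) \geq \mu_p(x)$ from monotonicity (same as the paper), but for the reverse inequality you invoke the exponential estimate of Section~\ref{kesten} (Lemma~\ref{petitc}). That lemma is proved only for $p$ in some $E_{\theta,S}$ with $\theta < p_c$, i.e.\ only when $|p| < p_c(\Z^d,\text{site})$; it relies on the stochastic domination of color clusters by subcritical Bernoulli clusters and simply has no content in the supercritical regime. But Lemma~\ref{muk} is asserted for \emph{every} $p \in \mathcal{P}$. When $|p| \geq p_c$ one has $\mu_p \equiv 0$, yet each $\mu_p^k$ is a genuine seminorm and it is not at all obvious a priori that $\inf_k \mu_p^k(x) = 0$: the issue is precisely that when there is an infinite monochromatic cluster, routes can be very long, so restricting to $k$-short paths could conceivably cost something in the limit. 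Your argument gives nothing in this regime, so the proof is incomplete.

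The paper's own argument is softer and covers all $p$ at once. For each fixed $n$, the sequence $T_p^k(0,nx)$ is nonincreasing in $k$ and, since every realization has a route of some finite length, it decreases to $T_p(0,nx)$ pointwise; it is bounded by a deterministic constant (passage time of a direct path), so dominated convergence gives $\mathbf{E}[T_p^k(0,nx)] \downarrow \mathbf{E}[T_p(0,nx)]$. Kingman's theorem identifies $\mu_p^k(x) = \inf_n \frac{1}{n}\mathbf{E}[T_p^k(0,nx)]$, and exchanging the two monotone infima over $k$ and $n$ yields $\inf_k \mu_p^k(x) = \inf_n \frac{1}{n}\mathbf{E}[T_p(0,nx)] = \mu_p(x)$. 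Besides being valid for all $p$, this keeps Lemma~\ref{muk} logically independent of the greedy lattice animal machinery, which the paper reserves for the genuinely harder statement that the convergence $\mu_p^k \to \mu_p$ is \emph{uniform} over $p$ in a suitable neighborhood (Lemma~\ref{convuni}). Your route conflates the two: the exponential bound is needed for uniformity, not for pointwise convergence, and using it here both narrows the scope of the lemma and creates an awkward forward dependence on Section~\ref{kesten}.
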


\begin{proof}[Proof of Lemma~\ref{muk}]
We divide the proof into three parts. We begin by proving the existence and some properties of the functions $\mu^ k_p$ on $\Z^d$. Then we extend the functions $\mu^ k_p$ by continuity on $\R^d$ and we show that \eqref{limk1} is still satisfied, first on $\mathbb{Q}^d$, then on $\R^d$. We finish with the demonstration of the second point~\eqref{limk2}. Again we work on $\Z^d$ and we extend the result on $\R^d$, \textit{via} $\mathbb{Q}^d$.

Let $x$ in $\mathbb{Z}^d$. For each integer $k$, we can apply the subadditive ergodic theorem to the family of random variables $\big(T_p^k(mx,nx) \big)_{0 \leq m <n}$. Thus, for each $x$ in $\mathbb{Z}^d$, we have the existence of $\mu_p^k(x)$ as the limit of $\frac{T_p^k(0,nx) }{n}$. In fact, the limit is the lower bound and is also the lower bound of the expectation. We can extend the function $\mu_p^k$ to all $\R^d$, as we did with $\mu_p$. Like $\mu_p$, the function $\mu_p^k$ is a 1-Lipschitz continuous seminorm.

Now we check that, for all $x$ in $\R^d$, $\lim_{n \rightarrow + \infty} \frac{ T_p^k(0,nx)}{n}=\mu_p^k(x)$ a.s.

We fix a positive integer $k$. For all $x$ in $\R^d$, we put $f_n(x)= \frac{T_p^k(0,nx)}{n}$. We proceed in two steps to study the convergence of the sequence $\left( f_n(x) \right)_{n \in \N^*}$, for all $x$ in $\R^d$. We first show the result on $\mathbb{Q}^d$, then we extend the result to $\R^d$. 

Let $x$ be in $\mathbb{Q}^d$. There exist $\hat{x}$ in $\Z^d$ and a positive integer $\lambda$ such that $x=\frac{\hat{x}}{\lambda}$. As $\hat{x}$ is in $\Z^d$, we already have $\lim_{n \rightarrow + \infty} f_n(\hat{x}) = \mu_p^k(\hat{x})$. 

Now we are going to prove that $\lim_{n \rightarrow + \infty} f_n( x)=\mu_p^k( x)=\frac{1}{\lambda} \mu_p^k(\hat{x})$.

\begin{align*}
 \bigg| \frac{T_p^k(0,n x)}{n}- \frac{T_p^k(0, \left\lfloor \frac{n}{\lambda} \right\rfloor \hat{x})}{n} \bigg| & \leq \frac{T_p^k(n x,\left\lfloor \frac{n}{\lambda} \right\rfloor \hat{x} )}{n} \leq \frac{1}{n} \left( \left\| n x - \left\lfloor \frac{n}{\lambda} \right\rfloor \hat{x} \right\| +d \right) \\
& \leq \frac{1}{n} \left( \bigg| \frac{n}{\lambda} - \left\lfloor \frac{n}{\lambda} \right\rfloor \bigg| \| \hat{x} \| +d \right) \leq \frac{\| \hat{x} \| +d }{n} \rightarrow 0
\end{align*}

Moreover, $
 \frac{T_p^k(0, \left\lfloor \frac{n}{\lambda} \right\rfloor \hat{x})}{\left\lfloor \frac{n}{\lambda} \right\rfloor} \times \left\lfloor \frac{n}{\lambda} \right\rfloor \times \frac{\lambda}{n} \times \frac{1}{\lambda} \rightarrow \mu_p^k(\hat{x}) \times \frac{1}{\lambda} = \mu_p^k(x).
$

Thus, for all $x$ in $\mathbb{Q}^d$, $\mu_p^k(x) = \lim_{n \rightarrow + \infty} \frac{T_p^k(0,nx)}{n}$.

Now we extend the result to $\R^d$. Let us first remark that, for all $x$ and $y$ in $\R^d$, we have
$$| f_n(x) -f_n(y) | \leq \frac{T_p^k(nx,xy)}{n} \leq \frac{\| nx-ny \| +2 }{n} \leq \| x -y \| + \frac{2}{n}.$$ 
Let $x$ be in $\R^d$ and $\e >0$. There exists $x_{\e}$ in $\mathbb{Q}^d$ such that $\|x- x_{\e} \| < \e$. Thus we have
\begin{align*}
|f_n(x) - \mu_p^k(x) | & \leq |f_n(x) - f_n(x_{\e})| + |f_n(x_{\e}) - \mu_p^k (x_{\e}) | + |\mu_p^k(x_{\e} - \mu_p^k(x)| \\
& \leq 2 \e + \frac{2}{n} + | f_n(x_{\e}) - \mu(x_{\e})|.
\end{align*}
Thanks to the result on $\mathbb{Q}^d$, there exists a positive integer $N$ such that, for each integer $n \geq N$, we have $|f_n(x) - \mu_p^k(x) | \leq 3 \e$.

Thus, for all $x$ in $\mathbb{R}^d$, we have $\mu_p^k(x) = \lim_{n \rightarrow + \infty} \frac{T_p^k(0,nx)}{n}$.

\medskip
It remains to prove the convergence of the sequence $\left( \mu_p^k(x) \right)_{k \in \N^*}$ to $\mu_p(x)$, for each $x$ in $\R^d$.

Fix an integer $k$. We have $T_p^k(0,nx)~\geq~T_p^{k+1}(0,nx),$ thus  $\textbf{E}T_p^k(0,nx)~\geq~\textbf{E}T_p^{k+1}(0,nx)$ and $\mu^k_p(x) \geq \mu_p^{k+1}(x)$.
Since the sequences $\left( \mu_p^k(x) \right)_{k \in \N^*}$ and $\left( \textbf{E}T_p^k(0,nx) \right)_{k \in \N^*}$ are monotonically decreasing and bounded by below, they converge and we have 
$$\lim_{k \rightarrow + \infty} \mu_p^k(x)=\inf_{k \in \N^*} \mu_p^k(x) \text{ and } \lim_{k \rightarrow + \infty} \textbf{E}\left[ T_p^k(0,nx) \right]= \inf_{k \in \N^*} \textbf{E}\left[ T_p^k(0,nx) \right].$$

Furthermore, we have $\lim_{k \rightarrow + \infty} \textbf{E}\left[ T_p^k(0,nx) \right] = \textbf{E}[T_p(0,nx)]=\inf_{k \in \N^*} \textbf{E}\left[ T_p^k(0,nx) \right]$, by the dominated convergence theorem.

Therefore, by Kingman's subadditive ergodic theorem, for each $x$ in $\Z^d$, we have
\begin{align*}
 \lim_{k \to + \infty} \mu_p^k(x)&  = \inf_{k \in \N^*} \mu_p^k(x) = \inf_{k \in \N^*} \inf_{n \in \N^*} \frac{\textbf{E}\left[T_p^k \left(0,n x \right) \right]}{n }  = \inf_{n \in \N^*} \frac{1}{n} \inf_{k \in \N^*} \textbf{E}\left[T_p^k \left(0,n x \right) \right]\\
& = \inf_{n \in \N^*} \frac{\textbf{E}\left[T_p \left(0,n x \right) \right]}{n}=\mu_p(x).
\end{align*}

Let $x$ be in $\R^d$ and $\e >0$. There exists $\hat{x}_{\e}$ in $\mathbb{Q}^d$ such that $\|x - \hat{x}_{\e} \| \leq \e$. Then there exists a positive integer $\lambda$ and $x_{\e}$ in $\Z^d$ such that $\hat{x}_{\e}=\frac{x_{\e}}{\lambda}$. Since $\mu_p$ and $\mu_p^k$ are seminorms, we have
\begin{align*}
 | \mu_p^k(x) - \mu_p(x) | & \leq | \mu_p^k(x) - \mu_p^k(\hat{x}_{\e}) | +| \mu_p^k(\hat{x}_{\e}) - \mu_p(\hat{x}_{\e}) | +| \mu_p(\hat{x}_{\e}) - \mu_p(x) | \\
& \leq 2 \e + \frac{1}{\lambda} | \mu_p^k(x_{\e}) - \mu_p(x_{\e}) |.
\end{align*}

Thanks to the previous result, there exists a positive integer $K$ such that for all integer $k>K$, we have $| \mu_p^k(y) - \mu_p(y) | \leq 3 \e$. 
Therefore we have $\lim_{k \to + \infty} \mu_p^k(x)=\mu_p(x)$, for each $x$ in $\R^d$.

\end{proof}

\subsubsection{Continuity of the function $\mu^k$}

\begin{lem}\label{contmuk}
For all positive integer $k$, the function $\mu^k$ :
\begin{tabular}{ccc}
 $\mathcal{P}$ & $\rightarrow$ & $\mathcal{C} \left( \mathcal{B}(0,1) , \R \right) $ \\
$p$ & $\mapsto$ & $\mu^k_p$
\end{tabular} is continuous with respect to the infinity norm.
\end{lem}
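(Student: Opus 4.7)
The plan is a site-wise coupling of the two colorings combined with a greedy lattice animal bound. Fix $k \ge 1$ and $p \in \mathcal{P}$; I want to show
$$\sup_{x \in \mathcal{B}(0,1)} \bigl|\mu_p^k(x) - \mu_q^k(x)\bigr| \longrightarrow 0 \quad \text{as } q \to p.$$
Construct the coupling independently at each site $u \in \Z^d$: let $(X_u^p, X_u^q)$ be the maximal coupling of the marginals $\sum_i p_i \delta_i$ and $\sum_i q_i \delta_i$, so that $\mathbf{P}(X_u^p \neq X_u^q) = \tfrac{1}{2}\|p-q\|_1 =: \pi$. Call $u$ \emph{bad} when $X_u^p \neq X_u^q$ and set $Y_u = \mathbf{1}\{u \text{ bad}\}$; the family $(Y_u)_{u \in \Z^d}$ is i.i.d.\ Bernoulli($\pi$). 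The key observation is that if an edge $e = \{u,v\}$ has no bad endpoint, then $t_p(e) = t_q(e)$, so for any path $\gamma$,
$$\bigl|T_p(\gamma) - T_q(\gamma)\bigr| \le 2 \sum_{u \in \gamma} Y_u.$$

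Applying this to a route $\gamma_q^\star$ achieving $T_q^k(0, (nx)^\star)$, and using that $\gamma_q^\star \in \Gamma_k(0, (nx)^\star)$, one obtains $T_p^k(0, (nx)^\star) \le T_p(\gamma_q^\star) \le T_q(\gamma_q^\star) + 2\sum_{u \in \gamma_q^\star} Y_u$; by symmetry in $p,q$,
$$\bigl|T_p^k(0, (nx)^\star) - T_q^k(0, (nx)^\star)\bigr| \le 2 \max_{\gamma \in \Gamma_k(0, (nx)^\star)} \sum_{u \in \gamma} Y_u \le 2 W(C_k n),$$
where $W(N) := \max\bigl\{\sum_{u \in \xi} Y_u : \xi \ni 0,\ |\xi|=N,\ \xi \text{ connected in } \Z^d\bigr\}$ is the greedy lattice animal weight, and $C_k$ is a constant (depending only on $k$ and $d$) bounding the number of vertices of any $k$-short path from $0$ to $(nx)^\star$ when $\|x\| \le 1$; monotonicity of $W(N)$ in $N$ (using $Y_u \ge 0$) makes the bound legitimate for paths of smaller size. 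Dividing by $n$, taking expectations, and letting $n \to \infty$---using the $\mathcal{L}^1$ convergence $\mathbf{E}[T_p^k(0, nx)]/n \to \mu_p^k(x)$ from Lemma~\ref{muk} (legitimate by dominated convergence, since $T_p^k \le C_k n$) together with the greedy animal $\mathcal{L}^1$ law $\mathbf{E}[W(N)]/N \to W_\pi$ for bounded weights (Martin~\cite{Martin})---yields
$$\bigl|\mu_p^k(x) - \mu_q^k(x)\bigr| \le 2 C_k W_\pi \qquad \text{for every } x \in \mathcal{B}(0,1).$$

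The main obstacle is to show $W_\pi \to 0$ as $\pi \to 0$, without which the estimate is useless. A union bound over connected animals handles it: the number of connected size-$N$ subsets of $\Z^d$ containing the origin is at most $D^N$ for some $D = D(d)$, while Chernoff's inequality gives $\mathbf{P}(\mathrm{Bin}(N,\pi) \ge Nt) \le (e\pi/t)^{Nt}$, so that
$$\mathbf{P}\bigl(W(N)/N \ge t\bigr) \le \bigl(D\,(e\pi/t)^t\bigr)^N,$$
which tends to $0$ as $N \to \infty$ whenever $t \log(t/(e\pi)) > \log D$. For any fixed $t > 0$ this condition holds once $\pi$ is small enough, so $W_\pi \le t$ for such $\pi$; letting $t \downarrow 0$ gives $W_\pi \to 0$ as $\pi \to 0$. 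Since the bound $2 C_k W_\pi$ is uniform in $x \in \mathcal{B}(0,1)$, continuity of $p \mapsto \mu_p^k$ in the supremum norm follows.
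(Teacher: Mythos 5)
Your argument is correct, and it reaches the lemma by a genuinely different route from the paper's. Both proofs start from a site-wise coupling of the two colorings: the paper constructs one explicitly by thresholding a common uniform variable against the cumulative sums of $p$ and of $q$, and therefore only gets an upper bound $\mathbf{P}(X_u^p\neq X_u^q)\leq\delta$ after also introducing a tail cutoff $S$; you use the maximal coupling, which gives the cleaner identity $\mathbf{P}(X_u^p\neq X_u^q)=\tfrac12\|p-q\|_1$ with no cutoff. From there the strategies diverge. The paper works entirely in probability: it bounds $\mathbf{P}\bigl(|T_q^k(0,nx)-T_p^k(0,nx)|\geq\e n\bigr)$ by a union bound over the at most $(2d)^{kn}$ $k$-short paths together with a binomial estimate for the event that a fixed path carries at least $\lfloor\e n/2\rfloor$ discrepant sites, chooses $\delta$ so that the resulting series is summable, and then concludes $|\mu^k_p(x)-\mu^k_q(x)|\leq\e$ via Borel--Cantelli. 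You instead establish the deterministic pathwise bound $|T^k_p(0,(nx)^\star)-T^k_q(0,(nx)^\star)|\leq 2W(C_k n)$, where $W$ is the greedy-lattice-animal weight of the Bernoulli discrepancy field $(Y_u)$, pass to $\mathcal{L}^1$ limits on both sides to get the $x$-uniform bound $\sup_{x\in\mathcal{B}(0,1)}|\mu^k_p(x)-\mu^k_q(x)|\leq 2C_k W_\pi$, and then separately show $W_\pi\to 0$ as $\pi\to 0$ by a Chernoff-plus-union-bound over lattice animals. The two arguments share the same combinatorial core---your final estimate for $W_\pi$ is essentially the counting the paper applies directly to paths---but your route isolates it into a clean standalone fact about greedy lattice animal constants with small Bernoulli parameter, and it makes the $x$-uniformity transparent (the paper secures this only implicitly, since its $\delta(\e,x,k)=(4d)^{-3k\|x\|/\e}$ is minimized over $\mathcal{B}(0,1)$ at $\|x\|=1$). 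The price is that you invoke Martin's $\mathcal{L}^1$ law for greedy lattice animals already here, whereas the paper reserves the greedy-animal machinery for the later Lemma~\ref{petitc} and keeps this lemma elementary.
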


It follows that the function \begin{tabular}{ccc}
 $\mathcal{P}$ & $\rightarrow$ & $\R_+$ \\
$p$ & $\mapsto$ & $\mu^k_p(x)$
\end{tabular} 
is continuous, for all positive integer $k$ and for each $x$ in $\R^d$.

\begin{proof}[Proof of Lemma~\ref{contmuk}]

The idea of the proof is to proceed by coupling. Let $\big(U_u \big)_{u \in \Z^d}$ be a family of independent uniform random variables on $[0;1]$ and $p=(p_1, \ldots)$ in $\mathcal{P}$. For all $u$ in $\mathbb{Z}^d$, we define $$X_u^p:= \sum_{i=1}^{+ \infty} i \textbf{1}_{\{p_0+ \cdots + p_{i-1} \leq U_u <p_1 + \cdots +p_i\}}$$ with $p_0=0$ by convention. By this way, the random family $(X_u^p)_{u \in \Z^d}$ corresponds to the model of first passage percolation attached to a random coloring with law $p$. By using this coupling, we will prove that, if $p$ and $q$ are two very close elements of $\mathcal{P}$, then the probability $\textbf{P} \big(X_u^p \neq X_u^q \big)$ is very small. So $\mu^k_p(x)$ and $\mu^k_q(x)$ are very close too.

\medskip

Let $p$ in $\mathcal{P}$, $x$ in $\mathcal{B}(0,1)$, $\e>0$ and $k$ be a positive integer.

Put $\delta := \delta(\e,x,k)= (4d)^{- \frac{3 k \|x \|}{\e}}$. We choose a positive integer $S$ such that $\sum_{i=S}^{+ \infty} p_i < \frac{\delta}{2}$.

Let $q$ in $\mathcal{P}$ such that $|p-q| < \frac{\delta}{(S-1)S}$. By the previous coupling, we have
\begin{align*}
\textbf{P} \big(X^p(u) \neq X^{q}(u) \big) & = \sum_{i=1}^{+ \infty} \textbf{P} \left(X^p(u)=i \text{ and } X^q(u) \neq i \right)\\
 & = \sum_{i=1}^{+ \infty} \textbf{P} \left(U(u) \in \left[ \sum_{j=0}^{i-1}p_{j} , \sum_{j=0}^{i}p_j \right[ \text{ and } U(u)\notin \left[\sum_{j=0}^{i-1}q_{j} , \sum_{j=0}^{i}q_j \right[  \right) \\
& \leq \sum_{i=1}^{S} \left( \bigg|\sum_{j=0}^{i-1}p_{j} -\sum_{j=0}^{i-1}q_j\bigg| + \bigg| \sum_{j=0}^{i}p_j - \sum_{j=0}^{i}q_{j} \bigg| \right) + \textbf{P} \left( U_u \geq \sum_{j=0}^{S}p_j \right)  \\
& \leq  \sum_{i=1}^{S} \left( 2|p_0-q_0| + \cdots +2|p_{i-1}-q_{i-1}| +|p_i-q_i| \right) + \sum_{i=S+1}^{+ \infty} p_i \\
& \leq 2 \sum_{i=1}^{S} (S+1-i) |p_i-q_i| + \sum_{i=S+1}^{+ \infty} p_i \\
& \leq 2 |p-q| \sum_{i=1}^{S} i + \frac{\delta}{2} \leq 2 \frac{\delta}{(S+1)S} \frac{S(S+1)}{2} + \frac{\delta}{2} =  \delta.
\end{align*}

\smallskip

Thereby we have

\begin{align*}
\textbf{P} \big( \big|T^k_{q} & (0,nx) - T^k_p(0,nx) \big|  \geq \e n \big) \\ 
& \leq \textbf{P}\left( \exists \gamma \in \Gamma(0, \cdot); l(\gamma) \leq kn , \exists \text{ at least } \left\lfloor \frac{\e n}{2} \right\rfloor \text{ vertices $u$ } \in \gamma , X_q(u) \neq X_p(u) \right) \\
& \leq \sum_{\gamma \in \Gamma(0, \cdot), \ |\gamma| \leq kn} \textbf{P} \left( \exists \text{ at least } \left\lfloor \frac{\e n}{2} \right\rfloor \text{ vertices $u$ } \in \gamma,X_q(u) \neq X_p(u)\right) \\
& \leq (2d)^{kn} \dbinom{kn}{\left\lfloor \frac{\e n}{2} \right\rfloor}\textbf{P} \left(X^p(u) \neq X^{q}(u) \right)^{\frac{\e n}{2}-1} \leq (2d)^{kn} 2^{kn}\textbf{P} \left(X^p(u) \neq X^{q}(u) \right)^{\frac{\e n}{2}-1} \\
& \leq (4d)^{nk} (4d)^{- \frac{3k}{\e} \left( \frac{\e n}{2}-1\right) } = (4d)^{\frac{3k}{\e}} \left( (4d)^{k- \frac{3k}{\e}  \frac{\e}{2}} \right)^{n}= (4d)^{\frac{3k}{\e}} \left( (4d)^{-\frac{k}{2}} \right)^{n}.
\end{align*}

Thus, for all positive integers $n$, $\textbf{P}\Big( \big|T^k_{q}(0,nx) - T^k_p(0,nx) \big|  \geq \e n \Big) \leq (4d)^{\frac{3k}{\e}} \left( (4d)^{-\frac{k}{2}} \right)^{n}.$

Therefore, $\sum_n \textbf{P}\Big( \big|T^k_{q}(0,nx) - T^k_p(0,nx) \big|  \geq \e n \Big) <  \infty.$

By Borel--Cantelli's lemma, we have $\big|T^k_{q}(0,nx) - T^k_p(0,nx) \big|  < \e n$ for $n$ large enough a.s.

\smallskip

Thus for all positive integers $k$, we have  $|\mu^k_q(x) - \mu^k_p(x)| < \e$. 

\end{proof}

\section{Proofs of Theorem~\ref{uni} and Theorem~\ref{hauss}}

\label{demo}
\subsection{Seminorm continuity}

We begin by showing that, for each $x$ in $\R^d$, the function $p \mapsto \mu_p(x)$ is continuous when $|p|\geq p_c$. In fact, only the continuity of the function $p \mapsto \mu_p(\e_1)$ will be useful. Then we prove separately the continuity of the function $p \mapsto \mu_p$ on the set $\mathcal{P}_{p_c}:=\left\lbrace p \in \mathcal{P} \ ; \ |p|<p_c \right\rbrace$ and on the set $\mathcal{P} \setminus \mathcal{P}_{p_c}$.\\
\paragraph{\textbf{Pointwise continuity on $\mathcal{P} \setminus \mathcal{P}_{p_c}$}}
Since, for each $x$ in $\R^d$, $\mu_p(x)$ is the infinimum of $\left(\mu_p^k(x)\right)_{k \in \N}$ and the function $\mu_p^k(x)$ is continuous with respect to $p$, then the function $\mu_p(x)$ is upper semi-continuous, $ \text{\textit{i.e.}  } \limsup_{q \rightarrow p} \mu_q(x) \leq \mu_p(x)$. Thus if $p$  is such that $|p| \geq p_c(\text{site})$, then we have
$0 \leq \liminf_{q \rightarrow p} \mu_q(x) \leq \limsup_{q \rightarrow p} \mu_q(x) \leq \mu_p(x)=0.$
Hence $\lim_{q \rightarrow p} \mu_q(x) =0= \mu_p(x).$ Thus the function $p \mapsto \mu_p(x)$ is always continuous on the set $\mathcal{P} \setminus \mathcal{P}_{p_c}$.\\
\paragraph{\textbf{Uniform continuity on the set $\mathcal{P} \setminus \mathcal{P}_{p_c}$}}
For each $x$ in $\mathcal{B}(0,1)$ and each $q$ in $\mathcal{P}$, we have $|\mu_q(x)| \leq \sum_{i=1}^d |x_i| |\mu_q(\e_i)|$. So when $p$ is such that $|p| \geq p_c$, we have $$\lim_{q \rightarrow p} \sup_{x\in B(0,1)} | \mu_p(x) -\mu_q(x)| \leq \lim_{q \rightarrow p} \sup_{x\in B(0,1)}  \sum_{i=1}^d |x_i| |\mu_q(\e_i)| \leq d \lim_{q \rightarrow p} |\mu_q(\e_1)|=\mu_p(\e_1)=0.$$

Thus $\lim_{q \rightarrow p} \sup_{x\in B(0,1)} | \mu_p(x) -\mu_q(x)|=0$, hence the continuity of the function $p \mapsto \mu_p$ on $\mathcal{P} \setminus \mathcal{P}_{p_c}$.\\

\paragraph{\textbf{Uniform continuity on the set $\mathcal{P}_{p_c}$}}
To prove the continuity of the function $\mu$ on the set $\mathcal{P}_{p_c}$, we introduce a family of sets $E_{\theta, S}$ defined, for each real number $\theta$ in $(0, p_c(\text{site}, \Z ^d))$ and each positive integer $S$, by
 $$E_{\theta,S}:=\left\lbrace p \in \mathcal{P} \ ; \ |p|<\theta, \ \sum_{i=S+1}^{+ \infty} p_i <\theta \right\rbrace. $$
 
 \begin{lem}\label{rec}
 The family $\big(E_{\theta,S} \big)_{0 <\theta <p_c, \ S \geq 5}$ is an open covering of the set $\mathcal{P}_{p_c}$.
 \end{lem}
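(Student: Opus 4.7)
The plan is to check two separate things: (i) each set $E_{\theta,S}$ is open in $\mathcal{P}$, and (ii) every $p \in \mathcal{P}_{p_c}$ lies in some $E_{\theta,S}$ with $0 < \theta < p_c$ and $S \geq 5$. Both steps are essentially topological, so the key point is simply to exploit the equivalence of the $L_1$, supremum, and pointwise-convergence topologies on $\mathcal{P}$ that was already recorded in Section~\ref{model}.

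\textbf{Openness.} Working with the supremum norm, the map $p \mapsto |p|$ is $1$-Lipschitz by the reverse triangle inequality, and for each fixed $S$ the map $p \mapsto \sum_{i=S+1}^{+\infty} p_i = 1 - \sum_{i=1}^{S} p_i$ is a finite affine combination of the coordinate projections $p \mapsto p_i$, which are continuous for the supremum norm. The set $E_{\theta,S}$ is the intersection of the preimages of $(-\infty,\theta)$ under these two continuous real-valued maps, so it is open.

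\textbf{Covering.} Let $p \in \mathcal{P}_{p_c}$. By definition $|p| < p_c$, so one can choose $\theta$ with $|p| < \theta < p_c$. Since $\sum_{i=1}^{+\infty} p_i = 1$, the tails $\sum_{i=S+1}^{+\infty} p_i$ tend to $0$ as $S \to +\infty$, and therefore there exists an integer $S$, which we may enlarge freely to ensure $S \geq 5$, such that $\sum_{i=S+1}^{+\infty} p_i < \theta$. For this pair $(\theta, S)$ one has $p \in E_{\theta, S}$, which proves the covering property.

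\textbf{Obstacle.} This is a routine verification and there is no genuine obstacle; the only subtlety is the choice of topology on $\mathcal{P}$, and that has already been handled (through Scheffé's lemma) before the statement of the lemma. The slight asymmetry that $S$ must be taken at least $5$ plays no role here since we are free to enlarge $S$, but it will presumably be used in the sequel to ensure some combinatorial estimate in the proof of Theorem~\ref{uni}.
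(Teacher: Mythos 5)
Your proof is correct and follows essentially the same route as the paper: express $E_{\theta,S}$ as the intersection of preimages of $[0,\theta)$ under two continuous real-valued maps and verify the covering by the tail condition. The one small variation is that you reduce $\sum_{i=S+1}^{+\infty} p_i$ to the finite affine expression $1-\sum_{i=1}^{S}p_i$ to get continuity in the supremum norm directly, whereas the paper bounds both maps as $1$-Lipschitz for $\|\cdot\|_1$; both are fine since the two topologies on $\mathcal{P}$ coincide.
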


\begin{proof}[Proof of Lemma~\ref{rec}]
Let $0 <\theta <p_c$, $S \geq 5$ and $p$ in $E_{\theta,S}$. 
Let $\phi$ be the application defined on $\mathcal{P}$ by $\phi(p) = \sup_i|p_i|$ and $\psi_S$ be the application defined on $\mathcal{P}$ by $\psi_S(p)= \sum_{i=S+1}^{+ \infty} p_i$. So $E_{\theta,S} = \phi^{-1} \left( [0, \theta [ \right) \cap \psi_S^{-1} \left( [0, \theta [ \right)$.
However, for all $p$ and $q$ in $\mathcal{P}$, we have
$$|\phi(p)-\phi(q)| \leq \big| |p| - |q| \big| \leq \big| p-q \big| \leq \|p-q\|_1$$
and
$$| \psi_S(p) - \psi_S(q) | \leq \Big| \sum_{i=S+1}^{+ \infty} |p_i| - \sum_{i=S+1}^{+ \infty} |q_i| \Big| \leq \sum_{i=S+1}^{+ \infty} |p_i-q_i| \leq  \|p-q\|_1.$$
Thus the functions $\phi$ and $\psi_S$ are continuous. Thus, for all $0 <\theta <p_c$ and $S \geq 5$, $E_{\theta,S}$ is an open set.

\smallskip

It remains to prove that $\bigcup_{0<\theta < p_c}~\bigcup_{S \geq 5}~E_{\theta,S} = \mathcal{P}_{p_c}$.
We clearly have $\bigcup_{0<\theta < p_c}~\bigcup_{S \geq 5}~E_{\theta,S}~\subset~\mathcal{P}_{p_c}$.
Then for all $p$ in $\mathcal{P}_{p_c}$, there exist a real number $\theta$ in $(0, p_c)$ and a positive integer $S$ larger than 5 such that $|p|< \theta$ and $\sum_{i=S+1}^{+ \infty} p_i < \theta$. Thus $p$ is in $E_{\theta,S} \subset \bigcup_{0<\theta < p_c} \bigcup_{S \geq 5} E_{\theta,S}$.

\medskip

Therefore the family $\big(E_{\theta,S} \big)_{0 <\theta <p_c, \ S \geq 5}$ is an open covering of the set $\mathcal{P}_{p_c}$.
\end{proof}

Now we prove that on each set $E_{\theta, S}$, the sequence $\left(\mu_p^k \right)_{k \in \N^*}$ converges uniformly to the function $\mu_p$. By this way, we will have proved \ref{uni}.

\begin{lem} \label{convuni}
Let $\theta$ be a real number in $(0, p_c)$ and $S$ be an positive integer larger than $5$. There exist positive real numbers $K(\theta, S), \ C_1(\theta, S)$ and $C_2(\theta, S)$, such that for each integer $k> K$,
$$\|\mu_p^ k - \mu_p\|_{\infty} \leq C_1 e^ {-C_2k^{\frac{1}{5}}}, \ \forall p \in E_{\theta, S}.$$
\end{lem}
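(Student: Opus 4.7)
The plan is to bound the (nonnegative) difference $\mu_p^k(x) - \mu_p(x)$ by comparing $\textbf{E}T_p^k(0,nx)$ and $\textbf{E}T_p(0,nx)$ at a single well-chosen $n$, where the gap will be controlled by the probability that every route from $0$ to $nx$ is ``long''. The key external input will be the deferred lemma of Section~\ref{kesten} on paths that are rather long but of reasonable passage time; that lemma is where the restriction $p \in E_{\theta,S}$ enters, and where the greedy lattice animal estimate (Proposition~\ref{martthm}) is used.

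Since $\mu_p^k - \mu_p \geq 0$ (restricting to the smaller class $\Gamma_k$ can only raise the infimum) and this difference is positively homogeneous (both $\mu_p$ and $\mu_p^k$ are positively homogeneous seminorms), it suffices to bound $\mu_p^k(x) - \mu_p(x)$ for $x$ on the unit sphere, where $\|x\|_1 \in [1,\sqrt{d}]$. Fix such an $x$, an integer $n$, and introduce the bad event $A_{k,n}(x) = \{T_p^k(0,nx) > T_p(0,nx)\}$, i.e.\ every route from $0$ to $(nx)^{\star}$ has length strictly greater than $kn\|x\|$. On $A_{k,n}^c$ the two passage times coincide; on $A_{k,n}$ one still has $T_p^k(0,nx) \leq Dn$ for some $D=D(d)$, because for $k$ large enough the axis-aligned path from $0$ to $(nx)^{\star}$ is a $k$-short path of passage time at most $\|nx\|_1 + O(1)$. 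Combined, this yields $\textbf{E}T_p^k(0,nx) - \textbf{E}T_p(0,nx) \leq Dn\,\textbf{P}(A_{k,n}(x))$.

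Admitting from Section~\ref{kesten} an estimate of the form $\textbf{P}(A_{k,n}(x)) \leq C_1(\theta,S)\,e^{-C_2(\theta,S)\,k^{1/5}}$, uniform in $p \in E_{\theta,S}$, in $\|x\|=1$, and in $n$ sufficiently large (as a function of $k$), the rest is immediate. Given $\e>0$, choose $n$ so that $\textbf{E}T_p(0,nx)/n \leq \mu_p(x)+\e$; then $\mu_p^k(x) - \mu_p(x) \leq \textbf{E}T_p^k(0,nx)/n - \mu_p(x) \leq \e + DC_1 e^{-C_2 k^{1/5}}$, and letting $\e \to 0$ and taking the supremum over $\|x\|=1$ yields the claim. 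Uniformity in $p \in E_{\theta,S}$ is automatic because the constants $C_1, C_2$ depend only on $\theta, S$ (and $d$); uniformity in $x$ on the unit sphere follows from the uniform bounds $\|x\|_1 \leq \sqrt{d}$, $T_p^k \leq Dn$.

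The real obstacle is the deferred probability estimate on $A_{k,n}(x)$. The hypothesis $|p| < \theta < p_c$ ensures that large monochromatic clusters are exponentially unlikely uniformly in $p \in E_{\theta,S}$, which is what rules out long routes of small passage time; the condition $\sum_{i>S} p_i < \theta$ controls the tail of the coloring law so that the color-indicator variables relevant to the greedy lattice animal comparison remain bounded in a way adapted to Proposition~\ref{martthm}. Heuristically, combining a union bound over self-avoiding paths of length at least $kn$ from $0$ (combinatorial factor of order $(2d)^{kn}$) with the greedy-lattice-animal large deviation estimate (exponential in the size of the animal) and optimizing between these two opposing rates is what produces the fractional exponent $k^{1/5}$ rather than a cleaner $k$.
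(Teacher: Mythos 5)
Your argument is essentially the paper's: both bound $\mu_p^k(x)-\mu_p(x)$ by the expected excess of $T_p^k(0,nx)$ over $T_p(0,nx)$, control that excess by a factor of order $n$ times the probability that no route from $0$ to $(nx)^{\star}$ is $k$-short, and then invoke the deferred Lemma~\ref{petitc} (proved via greedy lattice animals in Section~\ref{kesten}) for a uniform exponential estimate over $E_{\theta,S}$. Restricting to the unit sphere by homogeneity and sending $n\to\infty$ against a fixed $\e$ rather than writing $\sup_n$ are cosmetic variants of the paper's presentation, so the proposal is correct and matches the paper's approach (your closing heuristic for the exponent $k^{1/5}$ is slightly off --- it comes from balancing Martin's $e^{-Cn/y^2}$ bound against the exponential tail of the subcritical cluster size under truncation, not from the path-counting entropy --- but this remark lies outside the proof proper).
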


\begin{proof}[Proof of Lemma~\ref{convuni}]
Let $\theta$ be in $(0, p_c)$, $S$ be an integer larger than $5$ and $p$ in $E_{\theta, S}$. We consider $x$ in $\mathcal{B}(0,1)$.

We know that
$\mu^k_p(x)-\mu_p(x) = \lim_{n \rightarrow + \infty} \textbf{E}\left[ \frac{T_p^k(0,nx)}{n} \right] - \lim_{n \rightarrow + \infty} \textbf{E}\left[\frac{T_p(0,nx)}{n} \right] \geq 0.$

Thus $0 \leq \mu^k_p(x)-\mu_p(x) \leq \sup_{n \geq 1} \textbf{E}\left[ \frac{T_p^k(0,nx)}{n}- \frac{T_p(0,nx)}{n} \right]$.
Since for all positive integers $n,k$, $T_p^k(0,nx) \geq T_p(0,nx)$, then we have
\begin{align*}
\textbf{E}[T_p^k(0,nx)] & \leq \textbf{E}[T_p(0,nx)] + \textbf{E}[T_p^k(0,nx) \textbf{1}_{\{T_p(0,nx) < T_p^k(0,nx)\}}]  \\
&\leq  \textbf{E}[T_p(0,nx)] + \left( n ||x|| +d \right) \textbf{P} \big( T_p(0,nx) < T_p^k(0,nx) \big)
\end{align*}

because the passage times are bounded from above by $1$ and there exists a deterministic path of length lower than $n||x||+d$ from $0$ to $n x$. Thus we obtain $$\mu^k_p(x)-\mu_p(x) \leq \sup_{n \geq 1} \textbf{P} \big( T_p(0,nx) < T_p^k(0,nx) \big) \times \|x\| \leq \sup_{n \geq 1} \textbf{P} \big( T_p(0,nx) < T_p^k(0,nx) \big).$$

Note that
\begin{align*}
 \textbf{P} \big( T_p(0,nx) < T_p^k(0,nx)\big) & \leq \textbf{P}  \big( \exists \gamma \in \Gamma(0,nx), \ l(\gamma) > nk, \ T_p(\gamma)=T_p(0,nx) \big)\\
& \leq \textbf{P}  \big( \exists \gamma \in \Gamma(0,nx), \ |\gamma| > nk, \ T_p(\gamma)=T_p(0,nx) \big)\\
& \leq \textbf{P}  \big( \exists \gamma \in \Gamma(0,nx), \ |\gamma| > nk, \ T_p(\gamma) \leq n \big) \\
& \leq \textbf{P}  \big( \exists \gamma \in \Gamma(0,\cdot), \ |\gamma| \geq nk, \ T_p(\gamma) \leq n \big)\\
& \leq \textbf{P}  \big( \exists \gamma \in \Gamma(0,\cdot), \ |\gamma| = nk, \ T_p(\gamma) \leq n \big). 
\end{align*}

Yet we have the following result whose proof is put back to Section~\ref{kesten}.
\begin{lem} \label{petitc}
Let $\theta$ be a real number in $(0, p_c)$ and $S$ be an integer larger than $5$.\\
There exist positive real numbers $N(\theta, S), K(\theta, S),C_1(\theta, S),C_2(\theta, S)$ such that, for each positive integers $n \geq N$ and $k \geq K$, we have
\begin{equation*}
\label{expok}
\textbf{P}_p \big( \exists \gamma \in \Gamma (\textbf{0}, \cdot) \ ; \ |\gamma| = nk, \ T(\gamma) \leq n \big) \leq C_1 e^{-C_2 (nk)^{\frac{1}{5}}}, \ \forall p \in E_{\theta,S}.
\end{equation*}
\end{lem}

By this lemma, there exist  positive real numbers $K(\theta,S), \ C_1(\theta,S), C_2(\theta,S)$ such that, for each positive integer $k > K,$ we have
\begin{equation*} 
\sup_{x \in B(0,1)} |\mu^k_p(x)-\mu_p(x)| \leq \sup_{n \geq 1} C_1 e^{-C_2 (nk)^{\frac{1}{5}}} \leq C_1 e^{-C_2 k^{\frac{1}{5}}}, \text{ for all $p$ in $E_{\theta,S}$}.
\end{equation*}
\end{proof}

Therefore we have the uniform convergence of the sequence $\left( \mu^k\right) _{k \in \N^*}$ to the function $\mu$, which completes the proof of Theorem~\ref{uni}.

\subsection{Convergence for Hausdorff distance}
\label{haus}

By Proposition~\ref{positivity}, the asymptotic shape is compact when $p$ is in the set $\mathcal{P}_{p_c}$, \textit{i.e.} $|p|<p_c$. Thus we will show the continuity of the asymptotic shape with respect to $p$ for the Hausdorff distance on $\mathcal{P}_{p_c}$.

\begin{proof}[Proof of Theorem~\ref{hauss}]
The function $\mu_p$ is continuous. Thus, on the compact $\{ \|x\| =1 \}$, the function $\mu_p$ attains its minimum value which is positive because $\mu_p(x)$ equals zero if and only if $x=0$, when $|p|<p_c$. 

We fix $p$ in $\mathcal{P}_{p_c}$ and $\e$ in $(0,1)$.
As the function $\mu$ is continuous, there exists a positive real number $\eta_1:= \eta_1(p,d)$ such that, if $q$ in $\mathcal{P}_{p_c}$ satisfies $|p-q|< \eta_1$, then $| \alpha_p - \alpha_q|< \frac{\alpha_p}{2}$. Therefore $\frac{1}{\alpha_q}  \leq \frac{2}{\alpha_p}$.
By Theorem~\ref{uni}, we have $\lim_{q \rightarrow p} \sup_{x \in \mathcal{B}(0,1)} \big| \mu_p(x) - \mu_{q}(x) \big|=0.$ Thus there exists a positive real number $\eta_2~:=~\eta_2~(\e,p,d)$ such that, if $q$ vsatisfies $|p- q| <\eta_2 $, then $\sup_{x \in \mathcal{B}(0,1)} \big| \mu_p(x) - \mu_{q}(x) \big|<  \frac{\e \alpha_{p}^2}{2}$.

We choose $\eta$ lower than $\eta_1$ and $\eta_2$, then we fix $q$ in $\mathcal{P}_{p_c}$ such that  $|p-q|< \eta$.
\smallskip

Therefore we have
\begin{align*}
 d_H \left( B_{\mu_q}, B_{\mu_p} \right) & \leq \sup_{\|x\|=1} d \left( \frac{x}{\mu_p(x)} , \frac{x}{\mu_q(x)} \right) \leq \sup_{\|x\|=1} \left\lbrace  \|x\| \frac{ \big| \mu_q(x) - \mu_p(x) \big|}{\mu_p(x) \mu_q(x)} \right\rbrace \\
& \leq \frac{1}{\alpha_p \alpha_q} \sup_{y \in \mathcal{B}(0,1)} \left\lbrace \big|  \mu_q(y) - \mu_p(y) \big| \right\rbrace \leq \frac{\e \alpha_{p}^2}{2} \times \frac{2 }{\alpha_p ^2} = \e.
\end{align*}
\end{proof}

\section{Upper bound for $\textbf{P}  \big( \exists \gamma \in \Gamma(0,n), \ |\gamma| = nk,  \ T^p(\gamma) \leq n \big)$}
\label{kesten}

Let us now proove Lemma~\ref{petitc}. Let $S$ be a positive integer larger than $5$, $\theta$ be a real number in $(0, p_c)$ and $p$ be in the set $E_{\theta,S}$. 
Fix $m$ and $k$ two positive integers.

First, note that
\begin{align*}
 \textbf{P} \big( \exists r \in \Gamma (0,\cdot),  \ |r| = mk, \ T(r) \leq m \big) & = \textbf{P} \left( \exists r \in \Gamma (0,\cdot),  \ |r| =n, \ T(r) \leq \frac{n}{k} \right) \text{ with } n=mk\\
& = \textbf{P} \left( \inf_{r \in \Gamma (0,\cdot),  \ |r|=n}  T(r) \leq \frac{n}{k} \right)\\
& = \textbf{P}\left(1 + \inf_{r \in \Gamma (0,\cdot),  \ |r|=n}  T(r) \leq \frac{n}{k} +1 \right)\\
& = \textbf{P}\left( \frac{1 + \inf_{r \in \Gamma (0,\cdot),  \ |r|=n}  T(r)}{n} \leq \frac{1}{k} + \frac{1}{n} \right).
\end{align*}

To study $\frac{1 + \inf_{r \in \Gamma (0,\cdot),  \ |r|=n}  T(r)}{n}$, we do the same remark as Fontes and Newman did in their article~\cite{fetn}. For any (self-avoiding) path starting from the origin and containing exactly $n$ sites, we have
\begin{align*}
1 + T(r) &   \geq \text{ number of distinct color clusters touched by } r  = \sum_{v \in r} | \C_v \cap r |^{-1},\\
\intertext{where $\C_v$ denotes the color cluster of vertex $v$ and $|A|$ denotes the number of vertices in $A$. Then, by using Jensen's inequality, we obtain}
\frac{1 + T(r)}{n} & = \frac{1 + T(r)}{|r|} \geq \frac{1}{|r|} \sum_{v \in r} | \C_v \cap r |^{-1}  \geq \left[  \frac{1}{|r|} \sum_{v \in r} | \C_v \cap r | \right]^{-1} \geq \left[ \frac{1}{|r|} \sum_{v \in r} | \C_v | \right]^{-1}. \end{align*}
For each color $s$ in $\N^*$, we define the color-$s$ cluster at vertex $v$ by 
\begin{equation*}\text{for } s \leq S, \ \C_v^s= \begin{cases}
                           \C_v, & \text{if } X_v=s,\\
                            \emptyset, &  \text{if } X_v \neq s,
                          \end{cases}
\text{ and } \C_v^{S+1}= \begin{cases}
                           \C_v, & \text{if } X_v \geq S+1 ,\\
                            \emptyset, &  \text{if } X_v < S+1. 
                          \end{cases}\end{equation*}
We identify all the colors larger than $S+1$.
With this notation, we have
$$\frac{1 + T(r)}{n} \geq \left[ \frac{1}{|r|} \sum_{v \in r} \sum_{s=1}^ {S+1} | \C_v^s | \right]^{-1}. $$
Consequently,
$$\frac{1 + \inf_{|r|=n}T(r)}{n} \geq \inf_{|r|=n} \left[ \frac{1}{|r|} \sum_{v \in r} \sum_{s=1}^ {S+1} | \C_v^s | \right]^{-1} = \left[ \sup_{|r|=n} \frac{1}{|r|} \sum_{v \in r} \sum_{s=1}^ {S+1} | \C_v^s | \right]^{-1},$$
where the sup is over all self-avoiding paths $r$ starting from the origin an containing exactly $n$ sites.

Thus, with $\alpha:=\frac{1}{k} + \frac{1}{n}$, we have
\begin{align*}
\textbf{P} \big( \exists r \in \Gamma (0,\cdot),  \ |r| = mk, \ T(r) \leq m \big) & \leq \textbf{P} \left( \sup_{|r|=n} \frac{1}{|r|} \sum_{v \in r} \sum_{s=1}^ {S+1} | \C_v^s | \geq \frac{1}{\alpha} \right)\\
& \leq \textbf{P} \left( \sum_{s=1}^ {S+1} \sup_{|r|=n} \frac{1}{|r|} \sum_{v \in r}  | \C_v^s | \geq \frac{1}{\alpha} \right)\\
& \leq \sum_{s=1}^ {S+1} \textbf{P} \left(  \sup_{|r|=n} \frac{1}{|r|} \sum_{v \in r}  | \C_v^s | \geq \frac{1}{\alpha(S+1)} \right).
\end{align*}

Yet we have the following stochastic domination inequality $\left(|\C_v^s|\right)_{v \in \Z^d} \prec \left(| \Cs_v^{\theta}|\right)_{v \in \Z^d}$, for each color $s$, where $\Cs_v^{\theta}$ is the open cluster of $v$ in Bernoulli site percolation of parameter $\theta$. We denote by $\textbf{P}_{\theta}^{\text{site}}$ the corresponding measure. Thus
$$\textbf{P} \big( \exists r \in \Gamma (0,\cdot),  \ |r| = mk, \ T(r) \leq m \big) \leq (S+1) \textbf{P}_{\theta}^{\text{site}} \left(  \sup_{|r|=n} \frac{1}{|r|} \sum_{v \in r}  | \Cs_v | \geq \frac{1}{\alpha(S+1)} \right).$$

Let us note that the right term depends only on $\theta$ and $S$, not on $p$. To finish, we shall prove the following lemma.

\begin{lem}\label{aa}
 Let $\theta$ be a real number in $(0, p_c)$ and $S$ be an integer larger than $5$.
There exist positive real numbers $N(\theta, S), K(\theta, S), C_1(\theta, S), C_2(\theta, S)$ such that, for each integers $n \geq N$ and $k \geq K$,
\begin{equation}
 \textbf{P}_{\theta}^{\text{site}} \left(  \sup_{|r|=n} \frac{1}{|r|} \sum_{v \in r}  | \C_v| \geq \frac{1}{\alpha(S+1)} \right) \leq C_1 e^{-C_2 (nk)^{\frac{1}{5}}}, \ \text{with } \alpha:=\frac{1}{k} + \frac{1}{n}.
\end{equation}
\end{lem}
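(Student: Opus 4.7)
The plan is to reduce the supremum over self-avoiding paths to a greedy lattice animal problem and then apply the deviation estimate from Proposition~\ref{martthm}, combined with a truncation to handle the unboundedness of cluster sizes. First, every self-avoiding path $r$ of size $n$ starting at $\textbf{0}$ is a connected subset of $\Z^d$ with $n$ vertices containing $\textbf{0}$, so
$$
\sup_{|r|=n}\frac{1}{|r|}\sum_{v\in r}|\Cs_v|\ \leq\ \frac{W(n)}{n},\qquad W(n):=\sup_{|\xi|=n,\,\textbf{0}\in\xi,\,\xi\text{ connected}}\sum_{v\in\xi}|\Cs_v|,
$$
the greedy animal weight for the i.i.d.\ sequence $X_v:=|\Cs_v|$. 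Since $\theta<p_c(\Z^d,\text{site})$, subcriticality gives an exponential tail $\textbf{P}_\theta^{\text{site}}(|\Cs_{\textbf{0}}|\geq y)\leq e^{-c_0 y}$ for some $c_0=c_0(\theta)>0$, yet the weights $X_v$ are still unbounded, so Proposition~\ref{martthm} does not apply directly.

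To remedy this, truncate at level $y>0$: set $\tilde X_v^{y}:=|\Cs_v|\wedge y$ and let $W^{y}(n)$ denote the corresponding greedy animal weight. Any animal of size $n$ through $\textbf{0}$ lies in the ball $B(\textbf{0},n)$, so on the event $\mathcal{A}_{n,y}:=\{|\Cs_v|\leq y\text{ for all }v\in B(\textbf{0},n)\}$ one has $W(n)=W^{y}(n)$, and a union bound yields $\textbf{P}(\mathcal{A}_{n,y}^{c})\leq Cn^{d}e^{-c_{0}y}$. Applying Proposition~\ref{martthm} to the bounded weights $\tilde X_v^{y}$ gives, for $n\geq N(y,d)$,
$$
\textbf{P}\Big(\frac{W^{y}(n)}{n}\geq W^{y}+1\Big)\leq C_{1}e^{-C_{2}n/y^{2}},
$$
where $W^{y}$ is the a.s.\ limit of $W^{y}(n)/n$. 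Since $\tilde X_v^{y}\leq|\Cs_v|$ and $|\Cs_{\textbf{0}}|$ has moments of every order in the subcritical regime, the Gandolfi--Kesten/Martin limit for the full weights is finite, so $W^{y}\leq W^{\star}(\theta)<\infty$ uniformly in $y$. With $M:=1/(\alpha(S+1))=nk/\big((n+k)(S+1)\big)\geq\min(n,k)/(2(S+1))$, the inequality $M\geq W^{\star}(\theta)+1$ holds as soon as $n\geq N$ and $k\geq K$ are large enough, and for such $(n,k)$
$$
\textbf{P}_\theta^{\text{site}}\Big(\sup_{|r|=n}\frac{1}{|r|}\sum_{v\in r}|\Cs_v|\geq M\Big)\leq C_{1}e^{-C_{2}n/y^{2}}+Cn^{d}e^{-c_{0}y}.
$$

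The remaining task is to choose $y=y(n,k)$ so that both tail terms are at most $e^{-C(nk)^{1/5}}$. Taking $y$ of order $(nk)^{1/5}$, the cluster-tail contribution $n^{d}e^{-c_{0}y}$ is of the desired form (the polynomial is absorbed into the exponential once $nk$ is large enough), and one then checks that the Martin term $e^{-C_{2}n/y^{2}}$ is also bounded by $e^{-C(nk)^{1/5}}$ whenever $n$ is not too small with respect to $k$. In the complementary range, one uses instead the elementary inequality $W(n)/n\leq\max_{v\in B(\textbf{0},n)}|\Cs_v|$, which follows from rewriting $\sum_{v\in\xi}|\Cs_v|=\sum_{\mathcal{C}}|\mathcal{C}\cap\xi|\cdot|\mathcal{C}|\leq n\max_{\mathcal{C}\cap\xi\neq\emptyset}|\mathcal{C}|$, together with the tail estimate on $|\Cs_{\textbf{0}}|$ and the lower bound $M\geq\min(n,k)/(2(S+1))$; this produces the crude bound $Cn^{d}e^{-c_{0}\min(n,k)}$, which is itself $\leq C_{1}e^{-C_{2}(nk)^{1/5}}$ in the range where truncation is inefficient. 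The principal technical obstacle is precisely this two-regime balancing: the exponent $\tfrac{1}{5}$ is not optimal in either individual estimate, but it is the best rate that survives the simultaneous optimization of the truncation level against Martin's bound and against the cluster-size tail.
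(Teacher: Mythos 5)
The central step of your argument does not hold: the weights $X_v := |\Cs_v|$ are \emph{not} independent, so Proposition~\ref{martthm} cannot be applied to the greedy animal weight $W(n) = \sup_{|\xi|=n}\sum_{v\in\xi}|\Cs_v|$. Indeed, whenever $u$ and $v$ lie in the same open cluster one has $|\Cs_u| = |\Cs_v|$ deterministically, a total correlation; more generally all the cluster sizes along a path are strongly dependent. Both Gandolfi--Kesten's linear growth theorem and Martin's large-deviation bound are stated only for i.i.d.\ vertex weights, and their proofs use independence in an essential way, so your displayed inequality $\textbf{P}(W^{y}(n)/n\geq W^{y}+1)\leq C_1 e^{-C_2 n/y^2}$ is unjustified for the family $\{|\Cs_v|\}$. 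Likewise the existence of a finite limit $W^{y}$ for that family is not a consequence of the Gandolfi--Kesten/Martin theory.

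This is precisely the obstruction that the paper circumvents with the Fontes--Newman construction: one introduces a genuinely i.i.d.\ family $\{\tilde Y_v\}_{v\in\Z^d}$ of random lattice animals distributed as $\Cs_0$, sets $\tilde\Cs_v := v+\tilde Y_v$ and $\tilde U_v := \sup\{|\tilde\Cs_u| : v\in\tilde\Cs_u\}$, establishes the stochastic domination $\{|\Cs_v|\}_v \prec \{\tilde U_v\}_v$, and then shows
\begin{equation*}
\frac{1}{|r|}\sum_{v\in r}\tilde U_v \ \leq\ 2\,\sup_{\xi'\supset r}\frac{1}{|\xi'|}\sum_{v\in\xi'}|\tilde\Cs_v|^2,
\end{equation*}
which reduces the problem to a greedy lattice animal estimate for the i.i.d.\ weights $|\tilde\Cs_v|^2$ (note the square, which appears because each $\tilde\Cs_u$ is counted once for every vertex of $r$ it covers). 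Only after this decoupling can one truncate at level $j^{2/5}$ and invoke Proposition~\ref{martthm}; the exponent $1/5$ then arises from balancing the Martin term $e^{-C j/(j^{2/5})^2}=e^{-Cj^{1/5}}$ against the cluster tail $j^{d}e^{-c\,j^{1/5}}$. Your truncate-and-balance scheme (and your crude fallback bound via $\max_{v\in B(\textbf{0},n)}|\Cs_v|$) is sound in outline, but the crucial reduction to i.i.d.\ weights is missing; without it the appeal to greedy lattice animal theory is not valid.
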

The idea is to use greedy lattice animals to study this probability.

\begin{proof}[Proof of Lemma~\ref{aa}]
Fix $0 < \theta < p_c$ and $S \geq 5$.

To prove Lemma~\ref{aa} by means of Proposition~\ref{martthm}, we need to relate the dependent family $|\Cs_v|$ to some independent variables. For that, we repeat arguments of Fontes and Newman~\cite{fetn}. We consider an i.i.d. family $\{ \tilde{Y}\}_{ v \in \Z^d}$ of random lattice animals, which are equidistributed with $\Cs_0$, the occupied cluster of this origin in Bernoulli site percolation. We put $\tilde{\Cs}_v := v + \tilde{Y}_v$ and $\tilde{U}_v := \sup \left\{| \tilde{\Cs}_u |; u \in \Z^d, \ v \in \tilde{\Cs}_u \right \}$, where the sup of an empty set is taken to be zero. 

Then we have the following stochastic domination inequality $ \left\{ | \Cs_v | \right\}_{v \in \Z^d} \prec \left\{ | \tilde{U}_v | \right\}_{v \in \Z^d}$ and for any self-avoiding path $r$, we have
\begin{align*}
\frac{1}{|r|} \sum_{v \in r} | \tilde{U}_v |  & \leq 2 \sup_{\xi' \supset r} \frac{1}{|\xi'|} \sum_{v \in \xi'} | \tilde{\Cs}_v |^2\\
\intertext{where the sup is over all lattice animals (\textit{i.e.} connected subset of $\Z^d$) containing $r$. Then, by combining this with the stochastic domination, we have}
\textbf{P}_{\theta}^{\text{site}} \left(  \sup_{|r|=n} \frac{1}{|r|} \sum_{v \in r}  | \Cs_v| \geq \frac{1}{\alpha(S+1)} \right) & \leq \textbf{P}_{\theta}^{\text{site}} \left( \sup_{|\xi| \geq n} \frac{1}{|\xi|} \sum_{v \in \xi} | \tilde{\Cs}_v |^2 \geq \frac{1}{2 \alpha(S+1)} \right) \\
& \leq \sum_{k=n}^{+ \infty} \textbf{P}_{\theta}^{\text{site}} \left( \sup_{|\xi| = k} \frac{1}{|\xi|} \sum_{v \in \xi} | \tilde{\Cs}_v |^2 \geq \frac{1}{2 \alpha(S+1)} \right).
\end{align*}

We put $W:= \lim_{n \rightarrow + \infty} \frac{1}{n} \sup_{| \xi|=n} \sum_{v \in \xi} | \Cs_v|^2$.

To use Proposition~\ref{martthm}, we need to have
\begin{equation}\label{need1}
\frac{1}{2 \alpha (S+1)} \geq W+1,
\end{equation}
and
\begin{equation}\label{need2}
\text{the random variables must be bounded by above.}
\end{equation}
The point~\eqref{need1} is equivalent to $\alpha= \frac{1}{k}+ \frac{1}{n} \leq \frac{1}{2(W+1)(S+1)}$. Thus we take two integers $N$ and $K>0$ such that if $n \geq N$ and $k \geq K$, $\frac{1}{k}+ \frac{1}{n} \leq \frac{1}{2(W+1)(S+1)}$.
For the point~\eqref{need2}, we truncate the random variables $|\tilde{\Cs}_v|^ 2$ up to an appropriate number and we control the possibility of a too large cluster.

Indeed, in sub-critical percolation, the tail of the distribution of $|\Cs_0|$ decays exponentially (see Kesten~\cite{cluster}). For each real number $\theta$ in $(0,p_c)$, there exists a constant $C(\theta)>0$, such that
$$\textbf{P}_{\theta}^{\text{site}} \left( | \Cs_0 | \geq n \right) \leq e^{-C n}, \ \forall n \in \N^*.$$

Thus
\begin{align*}
\textbf{P}_{\theta}^{\text{site}} & \left( \sup_{|\xi| = j}  \frac{1}{|\xi|} \sum_{v \in \xi} | \tilde{\Cs}_v |^2  \geq \frac{1}{2 \alpha(S+1)} \right)\\
&  \leq  \textbf{P}_{\theta}^{\text{site}} \left( \sup_{|\xi| = j} \frac{1}{|\xi|} \sum_{v \in \xi} |\tilde{\Cs}_v |^2 \geq W+ 1 \right) \\
&  =  \textbf{P}_{\theta}^{\text{site}} \left( \sup_{|\xi| = j} \frac{1}{|\xi|} \sum_{v \in \xi} |\tilde{\Cs}_v |^2 \geq W+ 1 \text{ and } \forall v \in [-j,j]^d, \ |\tilde{\Cs}_v |^2 < j^{\frac{2}{5}} \right) \\
& \phantom{= \textbf{P}_{\theta}^{\text{site}}} + \textbf{P}_{\theta}^{\text{site}} \left( \sup_{|\xi| = j} \frac{1}{|\xi|} \sum_{v \in \xi} |\tilde{\Cs}_v |^2 \geq W+ 1 \text{ and } \exists v \in [-j,j]^d, \ |\tilde{\Cs}_v |^2 \geq j^{\frac{2}{5}} \right) \\
& \leq \textbf{P}_{\theta}^{\text{site}} \left( \sup_{|\xi| = j} \frac{1}{|\xi|} \sum_{v \in \xi} \min \left(|\tilde{\Cs}_v |^2, j^{\frac{2}{5}} \right) \geq W+ 1 \right) + \textbf{P} \left( \exists v \in [-j,j]^d, \ |\tilde{\Cs}_v |^2 \geq j^{\frac{2}{5}} \right). 
\end{align*}

By Proposition~\ref{martthm}, for $j$ large enough, we have
\begin{align*}
\textbf{P}_{\theta}^{\text{site}} \left( \sup_{|\xi| = j} \frac{1}{|\xi|} \sum_{v \in \xi} | \tilde{\Cs}_v |^2 \geq \frac{1}{2 \alpha(S+1)} \right) & \leq C_1 \exp \left\{ - \frac{C_2 j}{\left(j^{\frac{2}{5}}\right)^2} \right\} + \sum_{v \in [-j,j]^d} \textbf{P}_{\theta}^{\text{site}} \left( |\tilde{\Cs}_v |^2 \geq j^{\frac{2}{5}} \right) \\
& \leq C_1 e^{ - C_2 j^{\frac{1}{5}}} + 2dj (2j+1)^d \textbf{P}_{\theta}^{\text{site}} \left( |\tilde{\Cs}_0 | \geq j^{\frac{1}{5}} \right)\\
& \leq C_3 e^{-C_4 j^{\frac{1}{5}}}, \ \text{for } j \text{ large enough}.
\end{align*}

Therefore, there exist positive real numbers $K,N,C_1,C_2$ such that, for all integers $n >N$ and $k> K$,
\begin{equation*}
 \textbf{P}_{\theta}^{\text{site}} \left( \sup_{|\xi| = j} \frac{1}{|\xi|} \sum_{v \in \xi} |\tilde{\Cs}_v |^2 \geq \frac{1}{2 \alpha(S+1)} \right) \leq C_1 e^{-C_2 j^{\frac{1}{5}}}, \text{ for all } j \geq n.
\end{equation*}

Thus, there exist positive real numbers $K, N, C_1, C_2$ such that, for all integers $n >N$ and $k > K$,
\begin{equation*}
 \textbf{P}_{\theta}^{\text{site}} \left( \sup_{|r| = n} \frac{1}{|r|} \sum_{v \in r} |\Cs_v | \geq \frac{1}{2 \alpha(S+1)} \right) \leq C_1 e^{-C_2 n^{\frac{1}{5}}}, \text{ with } \alpha= \frac{1}{k}+ \frac{1}{n}.
\end{equation*}

\end{proof}

This concludes the proof of Lemma~\ref{petitc}.

\section*{Acknowledgement} The author would like to thank Olivier Garet for his guidance and helpful discussion during the achievement of this work.


\end{document}